\DeclareMathOperator{\ep}{\epsilon}
\DeclareMathOperator{\Gal}{Gal}
\DeclareMathOperator{\Aut}{Aut}
\DeclareMathOperator{\Irr}{Irr}
\DeclareMathOperator{\GL}{GL}
\DeclareMathOperator{\SL}{SL}
\newcommand{\FF}{\mathbb{F}}
\newcommand{\CCC}{{\mathcal C}}
\newcommand{\ZZ}{{\mathbb{Z}}}
\DeclareMathOperator{\Sp}{Sp}
\DeclareMathOperator{\SSS}{PSp}
\DeclareMathOperator{\PSL}{PSL}
\DeclareMathOperator{\LL}{PSL}
\DeclareMathOperator{\PSp}{PSp}
\newcommand{\cd}{{\mathrm {cd}}}
\newcommand{\cod}{{\mathrm {cod}}}
\numberwithin{equation}{section}
\newcommand{\Alt}{{\mathrm {A}}}
\newtheorem{theorem}{Theorem} 
\newtheorem{conj*}{Conjecture}
\newtheorem{conj}[theorem]{Conjecture}
\newtheorem{thm}{Theorem}[section] 
\newtheorem{lem}[thm]{Lemma}
\theoremstyle{definition}
\begin{document}

\title[Codegrees of finite simple groups]{A characterization of some finite simple  groups by their character codegrees}

\author{Hung P. Tong-Viet}
\address{H.P. Tong-Viet, Department of Mathematics and Statistics, Binghamton University, Binghamton, NY 13902-6000, USA}
\email{htongvie@binghamton.edu}
\dedicatory{}

\begin{abstract}  For a finite group $G$ and an irreducible complex character $\chi$ of $G$, the codegree of $\chi$ is defined by $\textrm{cod}(\chi)=|G:\textrm{ker}(\chi)|/\chi(1)$, where $\textrm{ker}(\chi)$ is the kernel of $\chi$. In this paper, we show that if $H$ is a finite simple exceptional group of Lie type or  a projective special linear group and $G$ is any finite group such that the character codegree sets of $G$ and $H$ coincide, then $G$ and $H$ are isomorphic.  
\end{abstract}

\thanks{}

\subjclass[2010]{Primary 20C15; Secondary 20D05}

\date{\today}

\keywords{codegrees; simple groups}

\maketitle


\section{Introduction}

Let $G$ be a finite group and let $\chi$ be an irreducible complex character of $G$. Let $\ker(\chi)$ denote the kernel of $\chi$, that is, the set of all elements $g\in G$ such that $\chi(g)=\chi(1)$. The positive integer $\chi(1)$ is called the character degree  of $\chi$.
This is the degree of a complex representation of $G$ that affords the character $\chi$. In \cite{CH}, Chillag and Herzog call the number $|G|/\chi(1)$ a `character degree quotient', and obtain several interesting results concerning the structure of the finite groups $G$ by putting certain arithmetic  restriction on these character degree quotients. For example, they show that if $|G|/\chi(1)$ is a prime power for some irreducible character $\chi$ of $G$, then $G$ is not a finite nonabelian simple group (see \cite[Theorem 1]{CH}). In a subsequent paper \cite{CMM}, Chillag, Mann and Manz called the quotient $|G|/\chi(1)$ the `codegree of $\chi$'.
Among other results, they show that if $G$ has an irreducible character $\chi$ which satisfies  the same condition as above for a prime $p$, then $\chi$ is monomial and $F^*(G)=O_p(G)> 1$, where $F^*(G)$ is the generalized Fitting subgroup of $G$ and $O_p(G)$ is the largest normal $p$-subgroup of $G$.  This result was later reproved by Riese and Schmid in \cite{RS}.

Qian, Wang and Wei \cite{QWW} modified the definition of codegrees of characters by defining it to be $|G:\ker(\chi)|/\chi(1)$. Clearly, if $\chi$ is faithful, then the two definitions coincide. However, the new definition of codegrees which we will use in this paper is more compatible with taking quotients in the sense that the codegree of a quotient of a finite group $G$ is also the codegree of $G$. Now let $\cd(G)$ and $\cod(G)$ be the set of all degrees and codegrees of $G$, respectively. It is well known that these two sets have strong influence on the group structure.  In the late 1990s,  Huppert \cite{Huppert} conjectured that every finite nonabelian simple group is essentially determined 
by the set of its character degrees, that is, if $H$ is a finite nonabelian simple group and $G$ is a finite group such that $\cd(G)=\cd(H)$, then $G\cong H\times A$ for some abelian group $A$. This conjecture has been verified for all sporadic simple groups, the alternating groups of degree at least $5$ and low degree finite nonabelian simple groups of Lie type (see \cite{BTZ,HMTW,NgT} and the references therein).  In \cite{Tong24}, we are able to verify Huppert's conjecture for the remaining finite simple exceptional groups of Lie type.
 Motivated by Huppert's conjecture, Qian \cite[Problem 20.79]{KM} proposed the following:

\begin{conj}\label{conj:codegrees}
Let $H$ be a finite nonabelian simple group and let $G$ be a finite group such that $\cod(G)=\cod(H)$. Then $G\cong H.$
\end{conj}

This conjecture has been verified for all $26$ sporadic simple groups, the alternating groups, ${}^2{\rm B}_2(q^2),$ $ {}^2{\rm G}_2(q^2),$ $ {}^2{\rm F}_4(q^2), $ $\PSL_2(q),\PSL_3(q)$, $\textrm{PSU}_3(q),\PSp_4(q)$ ($q$ even) and other small finite nonabelian simple groups in \cite{Aha,BAK,DMSSY2,DMSSY1,Gintz,GZY,LY,Wang,Yang24}.
In a recent paper, Hung and Moret\'{o} \cite{HM} show that the set of character codegrees together with their multiplicity determines the finite nonabelian simple groups up to isomorphism. This is analogous to a result stating that  finite nonabelian quasisimple groups are uniquely determined by the structure of their complex group algebra or equivalently their character degrees together with their multiplicity obtained  in \cite{BNOT,TV12a,TV12b,TV12c}. The authors also show that if $S$ and $H$ are finite nonabelian simple groups such that $\cod(S)\subseteq \cod(H)$, then $S\cong H$.  This important result allows us to complete the verification of Conjecture \ref{conj:codegrees} for all finite simple exceptional groups of Lie type and ${\rm PSL}_n(q)$ for $n\ge 4$ and $q$  a prime power.

\begin{theorem}\label{th:main1}
Let $H$ be a finite nonabelian simple exceptional group of Lie type or ${\rm PSL}_n(q)$ with $n\ge 4$ and $q$ a prime power. Let $G$ be a finite group such that $\cod(G)=\cod(H).$ Then $G\cong H.$
\end{theorem}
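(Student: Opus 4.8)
The plan is to exploit two features of codegrees highlighted in the Introduction --- that $\cod(G/N)\subseteq\cod(G)$ for every $N\trianglelefteq G$, and that by the Hung--Moret\'o theorem an inclusion $\cod(S)\subseteq\cod(H)$ of codegree sets of nonabelian simple groups already forces $S\cong H$ --- and then to close the remaining gap using the explicit character theory of the groups on our list. Throughout, let $d(H)$ denote the smallest degree of a nontrivial irreducible character of $H$. Since every nonlinear $\chi\in\Irr(H)$ is faithful, $\cod(H)=\{1\}\cup\{|H|/\chi(1):1_H\ne\chi\in\Irr(H)\}$; hence the greatest element of $\cod(H)$ is $|H|/d(H)$, every nontrivial element of $\cod(H)$ exceeds $|H|^{1/2}$ (because $\chi(1)<|H|^{1/2}$ for all $\chi$), and, inspecting the order formulas, every prime divisor of $|H|$ is smaller than $|H|^{1/2}$; consequently $\cod(H)$ contains no prime number.

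\emph{Step 1 (perfectness).} Since $\pi(\cod(K))=\pi(K)$ for any finite group $K$, one first gets $\pi(G)=\pi(H)$. Next, $G$ must be perfect: otherwise it has a quotient of some prime order $p\in\pi(G)=\pi(H)$, and the associated linear character has codegree $p\in\cod(G)=\cod(H)$, contrary to the last remark. So $G=G'$. \emph{Step 2 (detecting $H$).} If $G$ is simple we are done by the Hung--Moret\'o theorem with $S=G$. Otherwise pick a maximal normal subgroup $N\trianglelefteq G$; as $G$ is perfect, $G/N$ is a nonabelian simple group with $\cod(G/N)\subseteq\cod(G)=\cod(H)$, so $G/N\cong H$ by Hung--Moret\'o. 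In particular $\cod(G)=\cod(G/N)$, and it remains to prove $N=1$.

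\emph{Step 3 (triviality of $N$).} Suppose $N\ne1$ and choose a minimal normal subgroup $M\le N$ of $G$. If $M$ is nonabelian, write $M\cong S^k$ with $S$ nonabelian simple. The permutation action of $G$ on the $k$ simple direct factors of $M$ produces a nontrivial transitive subgroup of $\Sigma_k$ as a quotient of $G$; if that quotient is non-perfect it contributes a prime to $\cod(G)$, impossible, and if it is perfect one argues through $G/C_G(M)$ (which embeds in $\Aut(S)\wr\Sigma_k$ with $C_{G/C_G(M)}(\mathrm{soc})=1$): an induced Steinberg-type character of $G/C_G(M)$ has trivial kernel, and its codegree is either too small or carries the wrong prime content to belong to $\cod(H)$, the only surviving case being $G=M\times C_G(M)$ with $G/C_G(M)\cong S$, which forces $\cod(C_G(M))=\{1\}$, i.e.\ $C_G(M)=1$ and $G=M=S\le N$, a contradiction. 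If $M$ is an elementary abelian $p$-group of rank $\ge2$, then $M$ is a nontrivial irreducible $\mathbb F_p[G/N]$-module, some $1\ne\theta\in\Irr(M)$ has inertia group $T=I_G(\theta)<G$, and for any $\psi\in\Irr(T\mid\theta)$ the induced character $\chi=\psi^G$ is irreducible with $M\cap\ker\chi=1$; using the known lower bounds for the dimension of a nontrivial $\mathbb F_pH$-module, in both the defining and the cross characteristics, one checks $\cod(\chi)>|H|/d(H)$, which is impossible. Finally, if every minimal normal subgroup of $G$ contained in $N$ has rank $1$, each is central in $G$ (as $G=G'$), so after iterating we reduce to $N=Z(G)$, i.e.\ $G$ is quasisimple with $G/Z(G)\cong H$. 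Then every $\chi\in\Irr(G)$ has $\ker\chi\le Z(G)$, so $\cod(G)$ is the union of the codegree sets of the quasisimple quotients of $G$, and one exhibits an irreducible character faithful on $Z(G)$ --- typically a semisimple character, or the Steinberg character of a proper cover --- whose codegree is divisible by $|Z(G)|$ and therefore is not in $\cod(H)$. This contradiction gives $N=1$, hence $G\cong H$. For the exceptional types with trivial Schur multiplier ($G_2$, ${}^3\mathrm{D}_4$, $F_4$, $E_8$, apart from a handful of small groups) this last subcase is vacuous.

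\emph{Main obstacle.} The genuine work is the final subcase of Step 3 --- ruling out the proper perfect central extensions of $H$. This needs the explicit character degrees of $\SL_n(q)$ for all $n\ge4$ and all prime powers $q$ with $\gcd(n,q-1)>1$, of the simply connected groups of types $E_6$, ${}^2E_6$ and $E_7$, and of the finitely many covering groups coming from exceptional Schur multipliers for small $q$, together with the verification that in each case some codegree detects the full central factor $|Z(G)|$. A secondary, more routine difficulty is making the order and module-dimension estimates in the abelian (and nonabelian) cases uniform in $q$, which forces a short separate inspection of the smallest members of each family.
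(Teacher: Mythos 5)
Your high-level plan (reduce to a perfect group with a quotient isomorphic to $H$ and kill the kernel) is in the same spirit as the paper's, and your perfectness argument via primes in the codegree set is a reasonable rederivation of \cite[Theorem 2.3]{HM}. However, there are several genuine gaps, and the most important ones occur exactly where the paper does its real work.

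First, your setup lacks the key structural input. You only invoke the Hung--Moret\'o ``inclusion implies isomorphism'' result, but the paper's reduction rests on \cite[Theorem 8.3]{HM}, which says a minimal counterexample $G$ has a \emph{unique} minimal normal subgroup $N$ with $G/N\cong H$. Uniqueness is essential: when you induce $\chi=\psi^G$ from $T=I_G(\theta)$ in the abelian case, you only get $M\cap\ker\chi=1$, and without uniqueness of the minimal normal subgroup you cannot conclude $\ker\chi=1$; thus $\cod(\chi)$ remains uncontrolled. Your ``iterate to $N=Z(G)$'' reduction in the rank-one case is likewise not a coherent induction without the minimal-counterexample + unique-minimal-normal framework. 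Second, the claim that in the elementary abelian case ``one checks $\cod(\chi)>|H|/d(H)$'' from dimension lower bounds is false in defining characteristic for $\PSL_n(q)$: the paper shows (Step~3 of Theorem~\ref{th:Ln}) that all such bounds only reduce $N$ to the natural $\SL_n(q)$-module with $|N|=q^n$, and then $r^{2m}=q^{2n}$ divides $|H|_p=q^{n(n-1)/2}$ whenever $n\ge 5$; to finish, the paper must invoke $H^2(\SL_n(q),V)=0$, a split extension, and a parabolic-subgroup character computation (Step~4). That entire step is missing from your proposal. Third, the quasisimple case argument is incorrect as stated: a codegree of $H$ can certainly be divisible by $|Z(G)|$ since $|Z(G)|$ divides the Schur multiplier, which divides $|H|$; and the Steinberg character of a quasisimple group of Lie type is \emph{trivial} on the center, so it cannot detect $Z(G)$ at all. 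The correct test, established in Lemma~\ref{lem:quasisimple}, is to exhibit a faithful $\chi\in\Irr(G)$ with $\chi(1)/|Z(G)|\notin\cd(H)$, and verifying this requires the explicit small-degree data (Weil characters, L\"ubeck's tables) that the paper carefully deploys. Finally, the nonabelian minimal normal case is argued only heuristically; the paper's Lemma~\ref{lem:minimal-normal} gives a clean one-line contradiction via an $\Aut(S)$-extendible character of $S$.
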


We should mention that  this is the first time  a conjecture in this flavor has been confirmed for a family of finite simple groups of Lie type with unbounded Lie rank, defined over an arbitrary finite field.

We now describe our approach to the proof of Theorem \ref{th:main1}. Let $H$ be one of the finite nonabelian simple groups in the statement of the theorem defined over a finite field of size $q=p^f$ for some prime $p$. So $H$ is a finite simple group  of Lie type in characteristic $p$.  Let $G$ be a counterexample to Conjecture \ref{conj:codegrees} with  minimal order.  By \cite[Theorem 8.3]{HM}, $G$ has a unique minimal normal subgroup $N$ and $G/N\cong H$. By Lemma \ref{lem:minimal-normal}, $N$ is an elementary abelian $r$-group of order $r^m$ for some prime $r$ and $G$ is perfect.  Hence, either $N=Z(G)$ or $C_G(N)=N$. 
Since $G$ is perfect, the former case implies that $G$ is a finite quasisimple group with $G/Z(G)\cong H$ and $|Z(G)|=r$. This case can be eliminated by finding certain small degree faithful irreducible character $\chi$ of $G$ such that $\chi(1)/r$ is not a degree of $H$ (see Lemma \ref{lem:quasisimple}). 
In the latter case,  $N$ is a faithful irreducible $\FF_r H$-module and  $r^{2m}$ divides  $ |H|$ (see Lemma \ref{lem:2m}).   At this point, we can divide the proof into two cases according to whether $r$ is the defining characteristic  $p$ or not. For the cross characteristic cases, we can use the well-known Landazuri-Seitz-Zalesskii's bounds (see \cite[Table 5.3.A]{KL}) to obtain a lower bound for $m$ and check that $r^{2m}$ is too large comparing to the order of the Sylow $r$-subgroup of $H$ and we arrive at a contradiction.  For the defining characteristic cases,  we first get the lower bound $m\ge Df$, where $D$ is the dimension of the natural module or the minimal module for $H$ (see \cite{GT05}). For finite simple exceptional groups of Lie type, this bound is large enough to guarantee that  $r^{2m}\nmid |H|$  and we are done. However for ${\rm PSL}_n(q)$ this is not the case. To obtain a contradiction, we use some results on the representation theory of finite  groups of Lie type in defining characteristic to reduce it to the cases where $N$ is essentially the natural module for $\SL_n(q)$ and then using the fact that the second cohomology group $H^2(\SL_n(q),N)$ is trivial to deduce that $G\cong N:\SL_n(q)$ is a split extension and produce a faithful irreducible character of $G$ whose degree is not divisible by $p$,  which leads to a contradiction.

Our approach could be used to verify Conjecture \ref{conj:codegrees} for other  small rank classical groups. However, we might need a different method to verify the last step for large rank cases.

\medskip
\noindent
\textbf{Organization.} We collect some useful results  for the proof of the main theorem in Section \ref{sec2}.
We  verify Conjecture \ref{conj:codegrees} for the  finite simple exceptional groups of Lie type in Section \ref{sec4} and $\PSL_n(q)$ in Section \ref{sec5}. In the last section, we  consider the minimal counterexamples to Conjecture \ref{conj:codegrees} for the projective symplectic groups $\SSS_{2n}(q)$.

\medskip
\noindent
\textbf{Notation.} Our notation is standard. We follow \cite{KL} for  the notation of simple groups and \cite{HB,Isaacs} for  the representation and character theories of finite groups. For a positive integer $n$, we write $\ZZ_n$ for a cyclic group of order $n.$ For a prime $p$ and a positive integer $n$, the $p$-part of $n$, denoted by $n_p$ is the largest power of $p$ dividing $n$ and $n_{p'}=n/n_p$ is the $p'$-part of $n.$ For a finite group $G$,  let $M(G)$ denote the Schur multiplier of $G$.  We denote by $\Irr(G)$ the set of complex irreducible characters of $G$. For $\chi\in\Irr(G)$,   the codegree of $\chi$ is defined by $\cod(\chi)=|G:\ker(\chi)|/\chi(1)$. If $N$ is a normal subgroup of a finite group $G$ and $\theta\in\Irr(N)$, we write $\Irr(G|\theta)$ for the set of all irreducible characters of $G$ lying above $\theta.$
As usual, $Z(G)$ is the center of $G$ and $C_G(A)$ is the centralizer of $A$ in $G$, where $A$ is a subgroup of $G$. Finally, a finite group $G$ is 
quasisimple if $G$ is perfect and $G/Z(G)$ is a finite nonabelian simple group.

\section{Preliminaries}\label{sec2}

We prove several lemmas that are needed for the proof of the main theorem. We first deduce some properties of a minimal counterexample to Conjecture \ref{conj:codegrees}.

\begin{lem}\label{lem:minimal-normal}
Let $H$ be a finite nonabelian simple group and let $G$ be a finite group with $\cod(G)=\cod(H)$. Suppose that $G$ has a unique minimal normal subgroup $N$ such that $G/N\cong H.$ Then $G$ is perfect and $N$ is an elementary abelian $r$-group, for some prime $r$.
\end{lem}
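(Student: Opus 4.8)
The plan has two essentially independent halves, one for each conclusion.

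\emph{Perfectness} (no codegrees needed). Since $H\cong G/N$ is nonabelian simple, hence perfect, $G=G'N$. As $N$ is a minimal normal subgroup and $G'\cap N\trianglelefteq G$, either $G'\cap N=N$ or $G'\cap N=1$. In the first case $N\le G'$, so $G=G'N=G'$. In the second case $G=G'\times N$ internally, so $G'\cong G/N\cong H$ is simple; then $G'$ is itself a minimal normal subgroup of $G$ (every $G$-invariant subgroup of $G'$ is normal in the simple group $G'$), and uniqueness of $N$ forces $G'=N$, whence $G'=G'\cap N=1$ and $G=N$, contradicting $H\ne 1$. So $G=G'$.

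\emph{Reducing to ``$N$ abelian''.} A minimal normal subgroup that is abelian is automatically an elementary abelian $r$-group for a single prime $r$, so it suffices to rule out $N=S_1\times\cdots\times S_k$ with each $S_i\cong S$ nonabelian simple. Assume this. Since $N$ is the \emph{unique} minimal normal subgroup and $Z(N)=1$, the normal subgroup $C_G(N)$ meets $N$ trivially, hence $C_G(N)=1$; in particular $Z(G)=1$ and $G$ embeds in $\Aut(N)=\Aut(S)\wr\mathrm{S}_k$, permuting the $S_i$ transitively. As no nonabelian simple group has a faithful transitive permutation action of degree below $5$, we get $k\ge 5$. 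Next, because $N$ is the unique minimal normal subgroup, any $\chi\in\Irr(G)$ with $N\not\le\ker\chi$ is faithful ($\ker\chi$ is a normal subgroup not containing $N$, hence trivial), and such $\chi$ exist, e.g.\ over any nontrivial $\theta\in\Irr(N)$. For any faithful $\chi\in\Irr(G)$ we have $\cod(\chi)=|G|/\chi(1)=|N|\,|H|/\chi(1)\in\cod(H)$, and since $H$ is simple $\cod(H)=\{1\}\cup\{\,|H|/\psi(1):1\ne\psi\in\Irr(H)\,\}$, so $\max\cod(H)=|H|/d_{\min}(H)$ with $d_{\min}(H)\ge 2$. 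Hence $|N|\,|H|/\chi(1)\le |H|/d_{\min}(H)$, i.e.\ $\chi(1)\ge d_{\min}(H)\,|N|\ge 2|S|^k$ for \emph{every} faithful $\chi\in\Irr(G)$.

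\emph{The contradiction.} To contradict this it suffices to build one faithful irreducible character of $G$ of degree only linear in $k$. Take $\vartheta\in\Irr(S_1)$ of minimal degree $d_{\min}(S)$ and set $\theta=\vartheta\otimes 1\otimes\cdots\otimes 1\in\Irr(N)$. Its $G$-orbit has length at most $k\,|\Out(S)|$ (the factor $k$ from the position; $|\Out(S)|$ because inner automorphisms act trivially on $\Irr(S)$), and $I_G(\theta)\le\Stab_{\Aut(S_1)}(\vartheta)\times(\Aut(S)\wr\mathrm{S}_{k-1})$. Invoking the known fact that an irreducible character of a finite simple group always extends to its inertia group in the full automorphism group, $\vartheta$ extends to $\Stab_{\Aut(S_1)}(\vartheta)$, hence $\theta$ extends to an irreducible character of $I_G(\theta)$; inducing this extension gives $\chi\in\Irr(G\mid\theta)$ with $\chi(1)=|G:I_G(\theta)|\,d_{\min}(S)\le k\,|\Out(S)|\,d_{\min}(S)$. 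This $\chi$ is faithful, since the $G$-orbit of $\theta$ contains, for each $j$, a character of $N$ that is nontrivial exactly on $S_j$, so $\chi|_N$ is faithful and $\ker\chi\cap N=1$. Combining with the bound above, $2|S|^k\le k\,|\Out(S)|\,d_{\min}(S)$; the CFSG estimates $|\Out(S)|\le\log_2|S|$ and $d_{\min}(S)<|S|^{1/2}$ then force $2|S|^{k-1/2}\le k\log_2|S|<k\,|S|^{1/2}$, i.e.\ $2|S|^{k-1}<k$, absurd for $k\ge 5$ and $|S|\ge 60$. Therefore $N$ is abelian, as required.

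The step I expect to be the main obstacle is the construction in the last paragraph of a sufficiently small faithful irreducible character: it relies both on the extendibility of simple-group characters to their inertia groups inside $\Aut(S)$ and on CFSG bounds relating $|\Out(S)|$ and minimal character degrees to $|S|$. For bounded $k$ one could instead bound the Clifford-theoretic multiplicity directly, but handling arbitrarily large $k$ appears to require the extendibility input (or a suitable substitute for it).
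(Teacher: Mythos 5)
Your proof is correct, but it takes a genuinely different route from the paper's on both halves.

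For perfectness, you give a short self-contained argument ($G=G'N$, and if $G'\cap N=1$ then $G'\cong H$ would itself be a minimal normal subgroup, contradicting uniqueness of $N$). The paper instead simply cites \cite[Theorem 2.3]{HM}; your argument is a clean substitute that avoids the reference.

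For showing $N$ is abelian, the paper's route is noticeably slicker and avoids all of the quantitative CFSG estimates you invoke. It uses a \emph{different} form of the extendibility theorem: there exists a nontrivial $\theta\in\Irr(S)$ that is $\Aut(S)$-invariant and extends to $\Aut(S)$. Then $\theta^k\in\Irr(N)$ is $G$-invariant (because $G$ permutes the factors and acts by automorphisms on each) and extends to a faithful $\chi\in\Irr(G)$ of degree $\theta(1)^k$. Comparing codegrees then gives $\varphi(1)=\theta(1)^k/|S|^k<1$ for the corresponding $\varphi\in\Irr(H)$, an immediate contradiction with no need for any bounds on $|\Out(S)|$ or $d_{\min}(S)$. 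Your version, by contrast, takes a character nontrivial on a single factor, so its $G$-orbit is nontrivial, and you must control the orbit length and the degree via the estimates $|\Out(S)|\le\log_2|S|$ and $d_{\min}(S)<|S|^{1/2}$. Both of those are true (the latter from $\sum_\psi\psi(1)^2=|S|$; the former is a CFSG fact, though not as standard a citation as the extendibility results), and your final inequality $2|S|^{k-1}<k$ is indeed absurd for every $k\ge 1$ and $|S|\ge 60$ -- so your detour through ``$k\ge 5$'' is unnecessary. That detour is also where you are slightly imprecise: the permutation action of $G$ on $\{S_1,\dots,S_k\}$ is never faithful (its kernel always contains $N$); what is faithful, for $k\ge 2$, is the induced action of $H\cong G/N$. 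Fortunately your concluding inequality does not actually rely on $k\ge 5$, so this does not break the argument. Finally, the step ``hence $\theta$ extends to an irreducible character of $I_G(\theta)$'' deserves a sentence: one should note that $S_2\cdots S_k\le\ker\theta$ is normal in $I_G(\theta)$, pass to $I_G(\theta)/C_{I_G(\theta)}(S_1)$ inside $\Stab_{\Aut(S_1)}(\vartheta)$, restrict the given extension of $\vartheta$ (the restriction stays irreducible since it already restricts irreducibly to $\Inn(S_1)$), and inflate back. These are routine, but they are not literal consequences of ``$\vartheta$ extends to its inertia group in $\Aut(S_1)$'' as written.
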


\begin{proof}
We have $N\cong S^k$ for some finite simple group $S$ and integer $k\ge 1.$ Assume that $S$ is nonabelian.
It is well-known that $S$ has a nontrivial irreducible character $\theta$ that extends to $\Aut(S)$ and thus the irreducible character $\theta^k\in\Irr(N)$ extends to $\chi\in \Irr(G)$ (see \cite{TV12a}). Now $\chi$ is faithful by the uniqueness of $N$. Hence $|G|/\chi(1)=|H|/\varphi(1)$ for some $\varphi\in\Irr(H)$. 
It follows that $\varphi(1)=\theta(1)^k/|S|^k$. However, since $S$ is a finite nonabelian simple group, $\theta(1)<|S|$ and so $\varphi(1)=(\theta(1)/|S|)^k<1$, which is a contradiction. 
Therefore, $S$ is abelian and hence $N$ is an elementary abelian $r$-group for some prime $r$. The fact that $G$ is perfect follows from \cite[Theorem 2.3]{HM}.
\end{proof}

Assume the hypothesis and the conclusion of Lemma \ref{lem:minimal-normal}. Since $N$ is the unique minimal normal subgroup of $G$ and is elementary abelian, $N\leq C_G(N)\unlhd G$. Since $G/N\cong H$ is a finite nonabelian simple group, either $C_G(N)=G$ and thus $G$ is a finite quasisimple group with center $N$ or $C_G(N)=N$. To handle the former case, we use the following observation.

\begin{lem}\label{lem:quasisimple}
Let $H$ be a finite nonabelian simple group and let $G$ be a finite quasisimple group with $N=Z(G)$ a cyclic group of prime order $r$ such that $G/N\cong H$. Assume $\cod(G)=\cod(H)$. Then for every faithful  character $\chi\in\Irr(G)$, $\chi(1)/r\in\cd(H)$. 
\end{lem}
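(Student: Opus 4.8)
The plan is a short codegree computation that uses only that $H$ is simple and that $|G| = r\,|H|$.

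First I would record the two elementary facts. Since $N = Z(G)$ has order $r$ and $G/N \cong H$, we have $|G| = r\,|H|$. Since $\chi$ is faithful, $\ker(\chi) = 1$, so $\cod(\chi) = |G|/\chi(1) = r\,|H|/\chi(1)$. Also, $G$ is quasisimple, hence nontrivial, so $\chi$ is nontrivial; from $\sum_{\psi \in \Irr(G)} \psi(1)^2 = |G|$ one gets $\chi(1)^2 \le |G| - 1$, whence $\chi(1) < |G|$ and therefore $\cod(\chi) > 1$.

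Next, by hypothesis $\cod(\chi) \in \cod(G) = \cod(H)$, so there is $\varphi \in \Irr(H)$ with $\cod(\varphi) = \cod(\chi)$. Because $H$ is nonabelian simple, $\ker(\varphi)$ equals $H$ when $\varphi = 1_H$ and is trivial otherwise; thus $\cod(1_H) = 1$ while $\cod(\varphi) = |H|/\varphi(1)$ for $\varphi \ne 1_H$. As $\cod(\chi) > 1$, the character $\varphi$ cannot be trivial, so $r\,|H|/\chi(1) = |H|/\varphi(1)$, which rearranges to $\chi(1) = r\,\varphi(1)$. In particular $r \mid \chi(1)$ and $\chi(1)/r = \varphi(1) \in \cd(H)$, which is the assertion.

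The argument is essentially mechanical and I do not expect a genuine obstacle; the one point needing a moment's attention is excluding the possibility that $\cod(\chi)$ coincides with $\cod(1_H) = 1$, which is why I would record the bound $\chi(1) < |G|$ at the outset (alternatively, one simply notes that a faithful irreducible character of a group of order $>1$ cannot have degree equal to the group order).
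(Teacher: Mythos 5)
Your proof is correct and takes essentially the same approach as the paper's: identify $\cod(\chi)=|G|/\chi(1)$ with some $\cod(\varphi)=|H|/\varphi(1)$ and rearrange. The one small addition you make — explicitly ruling out $\varphi=1_H$ via the bound $\cod(\chi)>1$ — is a point the paper's proof silently assumes (it writes $\cod(\varphi)=|H|/\varphi(1)$ without noting this fails for the trivial character), so your version is, if anything, a shade more careful.
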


\begin{proof}
Let $\chi\in\Irr(G)$ be a faithful  character of $G$. Since $\cod(\chi)=|G|/\chi(1)\in\cod(H)$, $|G|/\chi(1)=|H|/\varphi(1)$ for some $\varphi\in\Irr(H)$. Hence $\varphi(1)=\chi(1)|H|/|G|=\chi(1)/r\in\cd(H)$. The proof is now complete.
\end{proof}

Therefore, to show that $G$ is not quasisimple, for each prime divisor $s$ of  $|M(H)|$, where $M(H)$ denotes the  Schur multiplier of $H$, and every cyclic central extension $L$ of $H$ with $|Z(L)|=s$, we need to find a faithful irreducible character $\chi$ of $L$ such that
$\chi(1)/s\not\in\cd(H)$. 
The next lemma is key to our approach to verify Conjecture \ref{conj:codegrees}. 

\begin{lem}\label{lem:2m}
Let $H$ be a finite nonabelian simple group and let $G$ be a finite group with $\cod(G)=\cod(H)$. Suppose that $G$ has a unique minimal normal subgroup $N$, where $N$  is an elementary abelian $r$-group of order $r^m$ for some prime $r$,  $G/N\cong H$ and $C_G(N)=N$. Then $N$ is a nontrivial  faithful irreducible $\FF_rH$-module and  the following hold.
\begin{enumerate}
\item[$(i)$]  $r^{2m}$ divides $|H|$.
\item[$(ii)$] If $\chi\in\Irr(G)$ is faithful, then $r^m\mid \chi(1)$ and $\chi(1)/r^m\in\cd(H)$. 
\end{enumerate}
\end{lem}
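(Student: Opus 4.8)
The plan is to split the statement into a purely structural part --- that $N$ is a nontrivial faithful irreducible $\FF_r H$-module --- which follows from the hypotheses alone, and the two numerical assertions $(i)$ and $(ii)$, which follow from the codegree equality by a short counting argument. For the structural part: since $N$ is abelian it acts trivially on itself under conjugation, so the conjugation action of $G$ on $N$ factors through $G/N\cong H$ and makes $N$ an $\FF_r H$-module; the kernel of this action is $C_G(N)/N$, which is trivial by hypothesis, so the module is faithful. As $N$ is a minimal normal subgroup of $G$ it is an irreducible $\FF_r G$-module, and since $N$ acts trivially on it, it is irreducible already as an $\FF_r H$-module; it is nontrivial because $H\neq 1$ acts faithfully on it. We also record that $|G|=|N|\cdot|G/N|=r^m|H|$.

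Part $(ii)$ is then immediate: if $\chi\in\Irr(G)$ is faithful, then $\ker\chi=1$, so $\cod(\chi)=|G|/\chi(1)$ lies in $\cod(G)=\cod(H)$; writing $|G|/\chi(1)=|H|/\varphi(1)$ for some $\varphi\in\Irr(H)$ and substituting $|G|=r^m|H|$ gives $\chi(1)=r^m\varphi(1)$, whence $r^m\mid\chi(1)$ and $\chi(1)/r^m=\varphi(1)\in\cd(H)$.

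For part $(i)$ the key observation is the dichotomy that every irreducible character of $G$ is either faithful or inflated from $H$: indeed $\ker\chi$ is a normal subgroup of $G$, and if it is nontrivial it must contain the unique minimal normal subgroup $N$, so $\chi$ is inflated from $G/N\cong H$. Consequently, in $|G|=\sum_{\chi\in\Irr(G)}\chi(1)^2$ the inflations of $\Irr(H)$ contribute exactly $\sum_{\varphi\in\Irr(H)}\varphi(1)^2=|H|$, so $|G|-|H|=\sum_{\chi\in\Irr(G)\ \text{faithful}}\chi(1)^2$. By $(ii)$ every term on the right has the form $r^{2m}\varphi_\chi(1)^2$ with $\varphi_\chi\in\Irr(H)$, so $(r^m-1)|H|=r^{2m}S$ for some positive integer $S$; since $\gcd(r^{2m},r^m-1)=1$, this forces $r^{2m}\mid|H|$.

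I do not expect a genuine obstacle in this lemma; it is a clean counting argument. The two points requiring care are the "faithful or inflated" dichotomy, which relies crucially on $N$ being the \emph{unique} minimal normal subgroup of $G$, and the concluding coprimality $\gcd(r^{2m},r^m-1)=1$, which is exactly what allows the factor $r^{2m}$ to pass from the product $(r^m-1)|H|$ onto $|H|$ alone. As a byproduct, the identity $(r^m-1)|H|=r^{2m}S$ forces $S\ge 1$, so $G$ does possess a faithful irreducible character (in agreement with Gasch\"utz's criterion applied to the module $N$), a fact that is used in the later steps.
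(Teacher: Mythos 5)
Your proof is correct. The structural part and part $(ii)$ follow the same short route as the paper, but for part $(i)$ you take a genuinely different path. The paper argues locally: since the nontrivial $G$-orbits on $\Irr(N)\cong N$ have sizes summing to $r^m-1$, one of them, say that of $\theta$, has size coprime to $r$; writing $T=I_G(\theta)$ and using Clifford's correspondence between $\Irr(T\mid\theta)$ and $\Irr(G\mid\theta)$, part $(ii)$ together with $r\nmid|G:T|$ forces $r^m\mid\phi(1)$ for every $\phi\in\Irr(T\mid\theta)$, whence $r^{2m}\mid\theta^T(1)=|T:N|$, which divides $|H|$. You argue globally instead: the uniqueness of $N$ splits $\Irr(G)$ into characters inflated from $H$ and faithful ones, so $|G|=\sum_\chi\chi(1)^2$ gives $(r^m-1)|H|=r^{2m}S$ with $S=\sum_{\chi\text{ faithful}}\varphi_\chi(1)^2\in\Z_{\ge 0}$ (and in fact $S\ge 1$, which already forces $G$ to have a faithful irreducible character); coprimality of $r^{2m}$ and $r^m-1$ then yields $r^{2m}\mid|H|$. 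Your argument is shorter and avoids Clifford theory entirely; the paper's inertia-group argument is slightly longer but produces the sharper conclusion $r^{2m}\mid|T:N|$ for a specific stabilizer $T$ and, more to the point, rehearses exactly the orbit/inertia analysis that reappears in Step 4 of the proof of Theorem \ref{th:Ln}. Both proofs are valid; neither has a gap.
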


\begin{proof}
Since $N$ is a minimal normal elementary abelian $r$-subgroup of $G$ and $C_G(N)=N$,  $H\cong G/N=G/C_G(N)$ is isomorphic to an irreducible subgroup of $\GL(N)\cong \GL_m(\FF_r)$. Therefore, $N$ is a nontrivial faithful irreducible $\FF_rH$-module. 

 We  show that $r^{2m}\mid |H|$. We know that $G$ acts on $\Irr(N)\cong N$ by conjugation and let $\{\mathcal{O}_i\}_{i=0}^t$ be $G$-orbits of this action with $\mathcal{O}_0=\{1_N\}.$ Since $G$ does not centralize any nontrivial element of $N$, $G$ does not stabilize any nontrivial irreducible character of $N$ and thus $\mathcal{O}_0$ is the only $G$-orbit on $\Irr(N)$ that has size $1$. For any  $j\ge 1$, fix $\theta_j\in \mathcal{O}_j$ and let $T_j=I_G(\theta_j)$. Then $|\mathcal{O}_j|=|G:T_j|$. If $r\mid |\mathcal{O}_j|$ for all  $j$ with $1\leq j\leq t$, then $r\mid |N|-1= r^m-1$, which is impossible. Therefore, there exists an index $j_0$ with $1\leq j_0\leq t$ such that $r\nmid |\mathcal{O}_{j_0}|$. Let $\theta=\theta_{j_0}$ and $T=T_{j_0}$.

Write $\theta^T=\sum_{i=1}^ae_i\phi_i$, where $\Irr(T |\theta)=\{\phi_1,\phi_2,\dots,\phi_a\}$ and $e_i=\phi_i(1)$ for all $i\ge 1$. By Clifford's correspondence theorem, $\theta^G=\sum_{i=1}^ae_i\chi_i$, where $\chi_i:=\phi_i^G\in\Irr(G|\theta)$. For each $1\leq k\leq a,$  $\chi_k$ is faithful  and thus $\cod(\chi_k)=|G|/\chi_k(1)=|H|/\varphi_k(1)$ for some $\varphi_k\in\Irr(H).$ It follows that $\chi_k(1)=r^m\varphi_k(1)$, whence $|G:T|\phi_k(1)=r^m\varphi_k(1)$.  Since $r$ does not divide $ |\mathcal{O}_{j_0}|=|G:T|$, $r^m\mid \phi_k(1)$ and hence $\phi_k(1)=r^mc_k$ for some positive integer $c_k.$ Now
 $$|T:N|=\theta^T(1)=\sum_{i=1}^ae_i\phi_i(1)=\sum_{i=1}^a\phi_i(1)^2=r^{2m}\sum_{i=1}^ac_i^2.$$ Therefore, $r^{2m}$ divides $|H|$ as wanted. 
 
 Finally, let $\chi\in\Irr(G)$ be a faithful character of $G$. Then $\cod(\chi)=|G|/\chi(1)=|H|/\varphi(1)$ for some $\varphi\in\Irr(H)$. It follows that $\chi(1)=|N|\varphi(1)=r^m\varphi(1)$ and (ii) follows.
\end{proof}

Let $q$ be a prime power and let $n\ge 2$ be an integer. A prime divisor $r$ of $q^n-1$ is called a primitive prime divisor of $q^n-1$ if $r$ divides $q^n-1$ but $r$ does not divide $q^i-1$ for any integer $i$ with $1\leq i<n$. By Zsigmondy's theorem \cite{Zsig}, such a prime $r$ exists unless $(n,q)=(6,2)$ or $n=2$  and $q$ is a Mersenne prime.  For a prime $r\neq p$, we write $d_q(r)$ for the order of $q$ modulo $r$. In this case, $r$ is a primitive prime divisor of $q^j-1$ for $j=d_q(r)$ and hence $j\mid (r-1)$; in particular, $r\ge j+1.$

We also need the following number theoretic results that will be used to compute the orders of the Sylow subgroups of finite simple groups of Lie type.

\begin{lem}\label{lem:Sylow}
Let $q$ be a prime power, let $d$ be a positive integer and let $r$ be a prime divisor of $q-\epsilon$, where $\epsilon=\pm$.
\begin{itemize}\addtolength{\itemsep}{0.2\baselineskip}
\item[{\rm (i)}] If $r=2$, then
\[
(q^d-\epsilon)_r = \left\{\begin{array}{ll}
(q-\epsilon)_r & \mbox{if $d$ is odd} \\
(q^2-1)_r(d/2)_r & \mbox{if $d$ is even and $\epsilon=+$} \\
2 & \mbox{if $d$ is even and $\epsilon=-$.}
\end{array}\right.
\]
\item[{\rm (ii)}] If $r$ is odd, then
\begin{align*}
(q^d-\epsilon)_r & = \left\{ \begin{array}{ll}
1 & \mbox{if $d$ is even and $\epsilon=-$} \\
(q-\epsilon)_r(d)_r & \mbox{otherwise}
\end{array}\right. \\
(q^d+\epsilon)_r & = \left\{ \begin{array}{ll}
(q-\epsilon)_r(d)_r & \mbox{if $d$ is even and $\epsilon=-$} \\
1 & \mbox{otherwise.}
\end{array}\right.
\end{align*}
\end{itemize}
\end{lem}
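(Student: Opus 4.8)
The plan is to deduce everything from the Lifting-the-Exponent Lemma (LTE) together with a handful of elementary congruences for the degenerate cases. Writing $v_r(\cdot)$ for the $r$-adic valuation (so that $n_r=r^{v_r(n)}$), I would first recall the two forms of LTE that are needed: if $r$ is an odd prime and $a,b$ are integers with $r\mid a-b$ and $r\nmid ab$, then $v_r(a^n-b^n)=v_r(a-b)+v_r(n)$ for all $n\ge 1$; and if $a,b$ are odd, then $v_2(a^n-b^n)=v_2(a-b)$ for $n$ odd, while $v_2(a^n-b^n)=v_2(a-b)+v_2(a+b)+v_2(n)-1$ for $n$ even. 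Note that the hypothesis $r\mid q-\epsilon$ forces $r\nmid q$, and when $r=2$ it forces $q$ odd.

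For part (ii) take $r$ odd. If $\epsilon=+$, then $q\equiv 1\pmod r$, so applying LTE with $a=q$, $b=1$ gives $(q^d-1)_r=(q-1)_r(d)_r$, while $q^d+1\equiv 2\pmod r$ forces $(q^d+1)_r=1$. If $\epsilon=-$, then $q\equiv-1\pmod r$: when $d$ is odd, $q^d+1=q^d-(-1)^d$, so LTE with $a=q$, $b=-1$ gives $(q^d+1)_r=(q+1)_r(d)_r$, and $q^d-1\equiv-2\pmod r$ forces $(q^d-1)_r=1$; when $d$ is even, $q^d+1\equiv 2\pmod r$ forces $(q^d+1)_r=1$, and writing $d=2e$ and using $q^2\equiv 1\pmod r$ one gets $v_r(q^d-1)=v_r(q^2-1)+v_r(e)=v_r(q+1)+v_r(d)$, where I used $v_r(q^2-1)=v_r(q-1)+v_r(q+1)=v_r(q+1)$ (since $r\nmid q-1$) and $v_r(e)=v_r(d)$ (since $r$ is odd). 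Reading $q^d-\epsilon$ and $q^d+\epsilon$ off according to the sign $\epsilon$ yields the two displayed formulas.

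For part (i) we have $r=2$ and $q$ odd. If $d$ is odd, I would factor $q^d-\epsilon=(q-\epsilon)\big(q^{d-1}+\epsilon q^{d-2}+\cdots+\epsilon^{d-1}\big)$; the cofactor is a sum of $d$ odd terms with $d$ odd, hence odd, so $(q^d-\epsilon)_2=(q-\epsilon)_2$. If $d$ is even and $\epsilon=+$, the even-$n$ form of LTE with $a=q$, $b=1$ gives $v_2(q^d-1)=v_2(q-1)+v_2(q+1)+v_2(d)-1=v_2(q^2-1)+v_2(d/2)$, i.e. $(q^d-1)_2=(q^2-1)_2(d/2)_2$. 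If $d$ is even and $\epsilon=-$, then $q^2\equiv 1\pmod 8$ for odd $q$, so $q^d=(q^2)^{d/2}\equiv 1\pmod 8$ and $q^d+1\equiv 2\pmod 8$, giving $(q^d+1)_2=2$.

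There is no genuine obstacle here; the only care needed will be the bookkeeping of the sign $\epsilon$ (note $q^d-\epsilon$ means $q^d-1$ or $q^d+1$, \emph{not} $q^d-\epsilon^d$), and, in the case $r=2$ with $d$ even, remembering the extra factor $v_2(q+1)$ appearing in LTE. As an alternative to invoking LTE, one can prove the same valuations by induction on $v_r(d)$ from the factorization $q^{rm}-1=(q^m-1)\big((q^m)^{r-1}+\cdots+1\big)$, estimating the $r$-part of the cofactor directly; this is the route frequently taken in the literature on Sylow subgroups of groups of Lie type and is equally routine.
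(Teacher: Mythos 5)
Your proof is correct and complete. The paper, however, does not prove this lemma at all: it simply cites Lemma 2.1 of Baumeister--Burness--Guralnick--Tong-Viet \cite{BBGT}, so there is no internal argument to compare against. What you supply instead is a self-contained derivation from the two forms of the Lifting-the-Exponent lemma (the $v_r(a^n-b^n)=v_r(a-b)+v_r(n)$ identity for odd $r$, and the $v_2(a^n-b^n)=v_2(a-b)+v_2(a+b)+v_2(n)-1$ identity for $r=2$, $n$ even), together with the elementary factorization $q^d-\epsilon=(q-\epsilon)\sum_{i=0}^{d-1}\epsilon^i q^{d-1-i}$ for $d$ odd and the congruence $q^2\equiv 1\pmod 8$ for the degenerate $r=2$, $\epsilon=-$, $d$ even case. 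I checked each branch: your use of $v_r(q^2-1)=v_r(q+1)$ when $r\mid q+1$ (since $r$ odd cannot also divide $q-1$), the bookkeeping $v_r(d/2)=v_r(d)$ for $r$ odd, and the translation $v_2(q-1)+v_2(q+1)+v_2(d)-1=v_2(q^2-1)+v_2(d/2)$ are all right, and you correctly distinguish $q^d-\epsilon$ from $q^d-\epsilon^d$. Your alternative of replacing LTE by induction on $v_r(d)$ via $q^{rm}-1=(q^m-1)\bigl(\sum_{i=0}^{r-1}q^{mi}\bigr)$ is indeed the route commonly taken in the groups-of-Lie-type literature (and presumably in \cite{BBGT}), so both approaches are standard; yours has the mild advantage of keeping the argument visibly symmetric in $\epsilon$ by the substitution $b=\pm 1$ in LTE. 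One cosmetic remark: in the $\epsilon=-$, $d$ even subcase you pass through $q^2$ and write $d=2e$, but you could apply LTE once with $a=q$, $b=-1$, $n=d$ directly, since $(-1)^d=1$; this shaves a line without changing the content.
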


\begin{proof}
This is Lemma 2.1 in \cite{BBGT}.
\end{proof}
The orders of the Sylow $r$-subgroups of finite simple classical groups in cross characteristic can be found in \cite{Weir} and \cite{Stather}.

If $r\ge 2$ and $x\ge 1$ are integers, we will use the inequality $x<r^x$ or equivalently $\log_r(x)<x$ regularly. If $1\leq a_1\leq a_2\leq \dots\leq a_k$ are integers with $k\ge 1$ and $q\ge 2$, then $$  \prod_{i=1}^k(q^{a_i}-1)< q^{a_1+\dots a_k}.$$ Moreover, if $a\ge 1$ is an integer, then $q^a\pm 1\ge \frac{1}{2}q^a$.
More generally, if $2\leq a_1<\dots<a_k$ are integers and $\ep_1,\ep_2,\dots,\ep_k\in\{\pm 1\}$, then \[\frac{1}{2}<\frac{(q^{a_1}+\ep_1)\dots(q^{a_k}+\ep_k)}{q^{a_1+\dots+a_k}}<2.\] For a proof, see Lemma 2.1 in \cite{TZ1}.

\section{Finite simple exceptional groups of Lie type}\label{sec4}

Let $q=p^f$, where $p$ is a prime and $f\ge 1$ is an integer.  We use the convention   ${\rm E}_6^+(q)$ for ${\rm E}_6(q)$ and ${\rm E}_6^-(q)$ for ${}^2{\rm E}_6(q)$. In this section, we verify Conjecture \ref{conj:codegrees} for the following simple exceptional groups of Lie type:
\begin{equation}\label{eqn1}
{\rm G}_2(q), {\rm F}_4(q), {\rm E}^\epsilon_6(q),{\rm E}_7(q), {\rm E}_8(q),{}^3{\rm D}_4(q),{}^2{\rm B}_2(q),{}^2{\rm F}_4(q),{}^2{\rm G}_2(q)
\end{equation}

We exclude   the groups $ {}^2{\rm B}_2(2),{\rm G}_2(2),{}^2{\rm G}_2(3) $ and ${}^2{\rm F}_4(2)$ from our list above as they are not simple. 
The order formulas for simple groups in \eqref{eqn1} can be found in \cite[Table 5.1.B]{KL} and the Schur multipliers of these groups can be found in \cite[Theorem 5.1.4]{KL}. The simple groups with exceptional Schur multipliers are listed in \cite[Table 5.1.D]{KL}. The lower bounds for the degrees of nontrivial absolutely irreducible projective representations of finite simple groups of Lie type in cross characteristic can be found in \cite[Table 5.3.A]{KL} and finally the minimal modules for these simple groups can be found in \cite[Table 5.4.C]{KL}.  

If $H$ is one of the simple groups: ${\rm E}_6(q), {}^2{\rm E}_6(q)$ ($q>2$) or ${\rm E}_7(q)$, then $M(H)$ is of order $\gcd(3,q-1),\gcd(3,q+1)$ and $\gcd(2,q-1),$ respectively. The Schur multiplier of $ {}^2{\rm E}_6(2)$ is $\ZZ_2\times \ZZ_6$. To handle these cases, we  need the data available on L\"{u}beck's website \cite{Lubweb}.
For each positive integer $k$, denote by $\Phi_m=\Phi_m(q)$ the cyclotomic polynomial of degree $m$ evaluated at $q$.

\begin{lem}\label{lem:Schur-cover}
Let $H$ be one of the simple groups ${\rm E}_6(q),{}^2{\rm E}_6(q)$ and ${\rm E}_7(q)$, where $q=p^f$ for some prime $p$. If $G$ is a finite quasisimple group such that $|Z(G)|=r$ is a prime and $G/Z(G)\cong H$, then $\cod(G)\neq \cod(H).$
 \end{lem}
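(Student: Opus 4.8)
The plan is to apply Lemma~\ref{lem:quasisimple}: it suffices to exhibit, for each group $G$ occurring in the statement, a faithful irreducible character $\chi\in\Irr(G)$ with $\chi(1)/r\notin\cd(H)$; that already forces $\cod(G)\neq\cod(H)$. So the first task is to pin down the relevant triples $(H,r,G)$. Since $G$ is a quotient of the universal cover $\widehat H$ of $H$ by a subgroup of the Schur multiplier $M(H)$ of index $r$, and by \cite[Theorem~5.1.4]{KL} we have $M({\rm E}_6(q))\cong\ZZ_{\gcd(3,q-1)}$, $M({}^2{\rm E}_6(q))\cong\ZZ_{\gcd(3,q+1)}$ for $q>2$, $M({}^2{\rm E}_6(2))\cong\ZZ_2\times\ZZ_6$ and $M({\rm E}_7(q))\cong\ZZ_{\gcd(2,q-1)}$, the only non-vacuous cases are: $H={\rm E}_6(q)$ with $3\mid q-1$, $r=3$ and $G=3.{\rm E}_6(q)$ the simply connected group; $H={}^2{\rm E}_6(q)$ with $q>2$ and $3\mid q+1$, $r=3$ and $G=3.{}^2{\rm E}_6(q)$; $H={\rm E}_7(q)$ with $q$ odd, $r=2$ and $G=2.{\rm E}_7(q)$; together with the finitely many central extensions of ${}^2{\rm E}_6(2)$ having cyclic centre of prime order $2$ or $3$.

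For the three infinite families I would invoke F.~L\"ubeck's tables \cite{Lubweb}, in which the irreducible character degrees of these groups and of their covers are known explicitly as polynomials in $q$ for each congruence class of $q$. From this data one reads off the degrees of the faithful irreducible characters of $G$, picks one of small degree, say $\chi(1)=b(q)$, and checks that the polynomial $b(q)/r$ does not coincide with any element of $\cd(H)$. Since two distinct polynomials agree for only finitely many $q$, this amounts to a generic polynomial comparison (for instance, $b(q)/r$ may fail to be a positive integer for $q$ in the required congruence class, or its factorisation into powers of $q$ and cyclotomic polynomials $\Phi_d(q)$ may have a shape realised by no character degree of $H$), followed by a finite check for the remaining small values of $q$ using \cite{GAP} or \cite{magma}. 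The central extensions of ${}^2{\rm E}_6(2)$ are then disposed of in exactly the same computational manner, just as ${\rm F}_4(2)$, ${\rm G}_2(3)$ and ${\rm G}_2(4)$ were treated in Step~1 of Propositions~\ref{prop:F4} and~\ref{prop:G2}.

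The main obstacle, in contrast with Steps~2 and~3 of the earlier propositions, is that a crude size estimate will not work here: the smallest faithful character degree of $G$ is a polynomial in $q$ of degree much larger than that of the smallest nontrivial degree of $H$, so $b(q)/r$ sits well inside the ``bulk'' of the degree spectrum of $H$ rather than below its first nontrivial degree, and moreover some faithful character degrees of $G$ genuinely do become character degrees of $H$ after division by $r$, so the choice of $\chi$ really matters. Consequently the argument has to rely on the full (and rather involved) classification of the character degrees of these exceptional groups, and the real work is the bookkeeping required to locate a suitable value $\chi(1)/r$ outside $\cd(H)$ uniformly in $q$, together with the finite list of small-$q$ and exceptional-Schur-multiplier checks.
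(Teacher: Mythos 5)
Your overall framing is right: by Lemma~\ref{lem:quasisimple} it suffices to produce a faithful $\chi\in\Irr(G)$ with $\chi(1)/r\notin\cd(H)$, and you correctly enumerate the non-vacuous triples $(H,r,G)$ and dispose of ${}^2{\rm E}_6(2)$ computationally. You also correctly diagnose that crude degree bounds will not help here and that some faithful degrees of $G$ do become degrees of $H$ after division by $r$. But at precisely the point where you say ``From this data one reads off the degrees of the faithful irreducible characters of $G$,'' there is a gap that your proposal does not close. L\"ubeck's tables list degrees and multiplicities for the simply connected group $L$ and for the adjoint group $L^{*}$; they do \emph{not} tell you which of the degree-$D$ characters of $L$ have $Z(L)$ in their kernel (i.e.\ which are characters of $H$). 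So ``reads off the faithful degrees'' is not a step one can take directly, and the subsequent ``generic polynomial comparison'' never gets started. This is the actual difficulty, not a bookkeeping issue.

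The paper closes this gap with a specific multiplicity-counting argument that your proposal lacks. One fixes a particular degree $D$ (for example $D=\Phi_1^4\Phi_3^2\Phi_4\Phi_5\Phi_6\Phi_7\Phi_8\Phi_9\Phi_{12}\Phi_{18}$ for ${\rm E}_7(q)$) such that (a) $D/r\notin\cd(L)$, hence $D/r\notin\cd(H)$ since $\cd(H)\subseteq\cd(L)$; (b) the multiplicity of $D$ in $\Irr(L)$ is known; (c) the multiplicity of $D$ in $\Irr(L^{*})$ is known and strictly smaller than $r$ times the multiplicity in $\Irr(L)$; and (d) $rD\notin\cd(L^{*})$. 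Then one argues by contradiction: if every degree-$D$ character of $L$ were unfaithful, it would be a character of $H\lhd L^{*}$ with $|L^{*}:H|=r$, and by (d) each such character extends (rather than induces irreducibly) to $L^{*}$, producing at least $r\cdot(\text{mult.\ of }D\text{ in }L)$ characters of $L^{*}$ of degree $D$, contradicting (c). Hence $L$ has a faithful character of degree $D$, and Lemma~\ref{lem:quasisimple} together with (a) finishes the proof. Without this (or some substitute) argument guaranteeing the \emph{existence} of a faithful character of the chosen degree, your plan does not constitute a proof; you should supply the mechanism, not merely observe that the choice of $\chi$ matters.
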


\begin{proof}

We first consider the case when $H$ has an exceptional Schur multiplier. So, let $H={}^2{\rm E}_6(2)$. Now $M(H)\cong \ZZ_2\times \ZZ_6$. We only need to consider the cases  $G\cong 3.{}^2{\rm E}_6(2)$ and $G\cong 2.{}^2{\rm E}_6(2)$.
In the first case, $3.{}^2{\rm E}_6(2)$ has a faithful irreducible character of degree $46683$ and $46683/3=15561$ is not a degree of $H$. The smallest nontrivial degree of ${}^2{\rm E}_6(2)$ is $1938.$ Similarly,  $2.{}^2{\rm E}_6(2)$ has a faithful irreducible character of degree $2432$ and $2432/2=1216$ is not a degree of $H$ as well. Thus by Lemma \ref{lem:quasisimple}, $\cod(G)\neq \cod(H)$.

Now assume that $H$ is one of the simple groups in the lemma with $q>2$ if $H\cong {}^2{\rm E}_6(q)$. Assume also that $H$ has a nontrivial Schur cover. Thus $|M(H)|$ is a prime which is 3 in the first two cases and $2$ in the last case. Let $L$ be the full covering group of $H$ and let $L^*$ be the dual of $L$, so $L$ is a simply connected group and $L^*$ is the adjoint group.
We can also assume that $H=[L^*,L^*]$ and $L/Z(L)\cong H.$ Note that $Z(L)\cong \ZZ_r$ and $|L^*:H|=r$.

(a) Assume that $H\cong {\rm E}_7(q)$. In this case, $d=\gcd(2,q-1)>1$, so $q\ge 3$ is odd. Note that $L={\rm E}_7(q)_{sc}$ and $L^*={\rm E}_7(q)_{ad}.$ By using \cite{Lubweb}, $L$ has a degree $$D=\Phi_1^4\Phi_3^2\Phi_4\Phi_5\Phi_6\Phi_7\Phi_8\Phi_9\Phi_{12}\Phi_{18}$$ with multiplicity $(q-1)/2$.
Now $D/2$ is not a degree of $L$, so it is not a degree of $H$. Note that $\cd(H)\subseteq \cd(L).$  We are done if we can show that $L$ has a faithful irreducible character of degree $D$. Assume by contradiction that every irreducible character of $L$ of degree $D$ is unfaithful, so they are characters of $H$. Now $L^*$ also has irreducible characters of degree $D$ with multiplicity $(q-1)/2$ if $3\nmid (q-1)$ and $(q-3)/2$ if $3\mid (q-1).$ Furthermore, $L^*$ has no irreducible character of degree $2D.$ Therefore, every irreducible character of $H$ of degree $D$ extends to $L^*$ but then $L^*$ would have at least $2(q-1)/2=q-1$ irreducible character of degrees $D$, which is not the case. Hence $L$ must have some faithful irreducible character of degree $D$ as required. 

The remaining two cases can be argued similarly using the degree $D$ of $L$ with the given multiplicity.

(b) Assume  $H\cong {\rm E}_6(q)$. Then $3\mid (q-1)$ and so $q\equiv 1,4$ (mod 6).  By \cite{Lubweb}, $L$ has a degree $$D=\Phi_3^2\Phi_6\Phi_9\Phi_{12}$$ with multiplicity $q-2$ and $L$ has no character of degree $D/3$. Also the multiplicity of $D$ in  $L^*$ is $q-4$ and $L^*$ has no character of degree $3D$. 

(c) Assume  $H\cong {^2}{\rm E}_6(q)$ with $q>2$.  Then $3\mid (q+1)$ and so $q\equiv 2,5$ (mod 6).  By \cite{Lubweb}, $L$ has a degree $$D=\Phi_3\Phi_6^2\Phi_{12}\Phi_{18}$$ with multiplicity $q$ and $L$ has no character of degree $D/3$. Moreover, the multiplicity of $D$ in  $L^*$ is $q-2$ and $L^*$ has no character of degree $3D$. 
\end{proof}

We now prove the main theorem in this section.

\begin{thm}\label{th:exceptional}
Let $H$ be one of the simple groups in \eqref{eqn1}  and let $G$ be a finite group such that $\cod(G)=\cod(H)$. Then $G\cong H.$
\end{thm}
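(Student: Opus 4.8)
The plan is to argue by minimal counterexample, following the four-step strategy outlined in the introduction. So let $H$ be one of the groups in \eqref{eqn1}, defined over $\FF_q$ with $q=p^f$, and let $G$ be a counterexample of minimal order to $\cod(G)=\cod(H)\Rightarrow G\cong H$. By \cite[Theorem 8.3]{HM} together with Lemma \ref{lem:minimal-normal}, $G$ is perfect, has a unique minimal normal subgroup $N$, $G/N\cong H$, and $N$ is an elementary abelian $r$-group of order $r^m$. Since $N$ is the unique minimal normal subgroup and is abelian, $N\le C_G(N)\unlhd G$, and as $G/N$ is simple we have either $C_G(N)=G$ or $C_G(N)=N$.

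\textbf{Step 1 (the quasisimple case).} If $C_G(N)=G$ then $G$ is quasisimple with $Z(G)=N$ of prime order $r$, and $r\mid |M(H)|$. Consulting \cite[Theorem 5.1.4, Table 5.1.D]{KL}, only finitely many of the $H$ in \eqref{eqn1} have nontrivial Schur multiplier (e.g.\ ${\rm G}_2(3),{\rm G}_2(4),{}^2{\rm B}_2(8),{}^2{\rm E}_6(2),{\rm E}_6^\epsilon(q)$ with $r\mid (3,q-\epsilon)$, ${\rm E}_7(q)$ with $r=2\mid (2,q-1)$, $\dots$), so for each such $(H,r)$ I would exhibit a faithful $\chi\in\Irr(L)$, $L$ the corresponding cover, with $\chi(1)/r\notin\cd(H)$; by Lemma \ref{lem:quasisimple} this rules out $G$ quasisimple. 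For the generic families $\rE_6^\epsilon(q)$ (with $r=3$) and $\rE_7(q)$ (with $r=2$) one uses the known lists of small character degrees of the covers (Lübeck's data \cite{Lubweb}); for the sporadic multipliers one checks directly in \cite{Lubweb}/GAP. The remaining case is $C_G(N)=N$, so $N$ is a nontrivial faithful irreducible $\FF_rH$-module, and Lemma \ref{lem:2m} gives $r^{2m}\mid |H|$.

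\textbf{Step 2 (cross characteristic, $r\ne p$).} Here $N$ is a faithful irreducible $\FF_rH$-module, so $m\ge e(H)$ where $e(H)$ is the Landazuri--Seitz--Zalesskii bound from \cite[Table 5.3.A]{KL} (its improvements in \cite{LaS,SZ}). The task is the arithmetic inequality $2e(H)>\log_r(|H|_r)$, which for every $H$ in \eqref{eqn1} I would verify using Lemma \ref{lem:Sylow} to bound $|H|_r$: the Sylow $r$-subgroup of $H$ has order roughly $r^{(d/\text{ord})\cdot v_r(q^{\text{ord}}-1)}$ where $d$ is the largest relevant cyclotomic exponent, and this is dominated by $r^{2e(H)}$ because $e(H)$ is comparable to $\dim$ of the minimal module while $|H|_r$ grows only like a bounded multiple of $f\log_r q$. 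For the finitely many small-$q$ groups where $e(H)$ is too small for the clean estimate (e.g.\ ${}^2{\rm B}_2(8)$, ${\rm G}_2(3)$, ${\rm G}_2(4)$, small ${}^3{\rm D}_4$, ${}^2{\rm F}_4(2)'$), I fall back on GAP/MAGMA to compute $|H|_r$ and $e(H)$ exactly.

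\textbf{Steps 3--4 (defining characteristic, $r=p$).} Now $N$ is a faithful irreducible $\FF_pH$-module, so by \cite{GT05} one has $m\ge Df$ where $D$ is the dimension of the minimal module for $H$ (e.g.\ $D=7$ for ${\rm G}_2$, $26$ for ${\rm F}_4$, $27$ for ${\rm E}_6^\epsilon$, $56$ for ${\rm E}_7$, $248$ for ${\rm E}_8$, $8$ for ${}^3{\rm D}_4$, etc.). Then I must show $p^{2Df}\nmid |H|$, i.e.\ $2Df>\log_p(|H|_p)$; but $|H|_p=q^{N^+}=p^{fN^+}$ where $N^+$ is the number of positive roots, so this reduces to the purely numerical fact $2D>N^+$, which holds for every exceptional type (for instance ${\rm E}_8$: $2\cdot 248=496>120$; ${\rm F}_4$: $52>24$; ${\rm G}_2$: $14>6$; ${}^3{\rm D}_4$: $16>12$; and similarly for the Suzuki/Ree families, where $N^+$ is replaced by the appropriate exponent in the order formula of \cite[Table 5.1.B]{KL}). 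Thus in the exceptional case Step 4 collapses to this single inequality and no orbit analysis is needed, which is the simplification promised in the introduction. Assembling: in all cases we reach a contradiction, so no minimal counterexample exists and $G\cong H$.

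\textbf{Main obstacle.} The conceptual heart is already packaged in Lemma \ref{lem:2m} and the external inputs \cite{GT05}, \cite{LaS}, \cite{SZ}; the real work is bookkeeping. I expect the most delicate part to be Step 1 for the generic covers $3.{\rm E}_6^\epsilon(q)$ and $2.{\rm E}_7(q)$ — one needs a uniform small-degree faithful character of the cover whose quotient by $r$ is provably not a degree of $H$, and confirming "not a degree of $H$" in a $q$-uniform way (rather than case-by-case in $q$) requires care with the generic degree formulas in \cite{Lubweb}; a close second is checking the handful of small-parameter groups with exceptional Schur multipliers by computer so that no case slips through.
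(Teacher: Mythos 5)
Your proposal is correct and follows essentially the same route the paper takes: reduce via \cite[Theorem 8.3]{HM}, Lemma \ref{lem:minimal-normal}, and Lemma \ref{lem:2m} to either the quasisimple case or the affine case $C_G(N)=N$, eliminate the quasisimple case by finding a faithful cover character $\chi$ with $\chi(1)/r\notin\cd(H)$ (for the generic covers $3.\rE_6^\epsilon(q)$ and $2.\rE_7(q)$ the paper's Lemma \ref{lem:Schur-cover} does exactly the multiplicity comparison with L\"ubeck's data you sketch), handle $r\neq p$ by the Landazuri--Seitz--Zalesskii bound versus $\log_r|H|_r$, and handle $r=p$ by $m\ge Df$ from \cite[Lemma 4.2]{GT05}. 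Your key observation that for every exceptional type $2D$ exceeds the number of positive roots (resp.\ the relevant exponent for ${}^2\mathrm{B}_2$, ${}^2\mathrm{G}_2$, ${}^2\mathrm{F}_4$), so that Step~4 is vacuous and no orbit analysis is needed, is precisely what the paper exploits in Propositions \ref{prop:E8}--\ref{prop:small-Lie-rank}.
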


\begin{proof}
Let $G$ be a counterexample to the theorem with minimal order.  Then $\cod(G)=\cod(H)$ but $G\not\cong H$.   By Lemma \ref{lem:minimal-normal}, $G$ is perfect and $N$ is the unique minimal normal subgroup of $G$ which is an elementary abelian $r$-group for some prime $r$. Let $|N|=r^m$. Then either $G$ is quasisimple with $N=Z(G)$ or $C_G(N)=N$.  In the former case, $G$ is a finite quasisimple group with $G/Z(G)\cong H$, where $|Z(G)|=r$. Note that if $M(H)=1$, then this case cannot occur. In the latter case, by Lemma \ref{lem:2m}, $N$ is a faithful irreducible $\FF_r H$-module and $r^{2m}\mid |H|$. Hence $2m\leq \log_r(|H|_r)$. We now consider each possibility.

\smallskip\noindent
\textbf{Case $H\cong {\rm E}^{\epsilon}_6(q),q=p^f$} By Lemma \ref{lem:Schur-cover}, $G$ is not a quasisimple group. Hence $C_G(N)=N$ and $N$ is a faithful irreducible $\FF_r H$-module with $2m\leq \log_r(|H|_r)$. Let $d=\gcd(3,q-\epsilon).$ By \cite[Table 5.1.B]{KL}, we have  \[|H|=\frac{1}{d}q^{36}(q^2-1)(q^5-\epsilon)(q^{6}-1)(q^{8}-1)(q^{9}-\epsilon)(q^{12}-1).\] We now consider the cases $r=p$ and $r\neq p$ separately.

(1a) Case   $r\neq p$.  By \cite[Theorem 5.3.9]{KL}, $m\ge q^{9}(q^2-1).$ Let $j=d_q(r)$. Then $$j\in\{1,2,3,4,5,6,8,9,12,10,18\}.$$

(i) Assume   $r=2$. Then $q$ is odd,  and  $j=1$. Here $|H|_2\leq 2^4(q^2-1)_2^6$ and  so $$\log_2(|H|_2)=4+6\log_2((q^2-1)_2)\leq 6q^2.$$ Now $$2m\ge 2q^{9}(q^2-1)>6q^2\ge \log_2(|H|_2)$$ for any $q\ge 3$, a contradiction.

\smallskip
(ii) Assume  $r\ge 3$ is odd.
Assume $1\leq j\leq 6$. Then $$|H|_r\leq   (q^j-1)_r^{6}(6\cdot 10\cdot 12 \cdot 18)_r\leq r^4(q^j-1)_r^6$$ and  so $\log_r(|H|_r)\leq  4+6\log_r((q^j-1)_r)\leq 4+6(q^j-1)\leq 6q^6.$ Now $$2m\ge 2q^{9}(q^2-1)>6q^6\ge \log_r(|H|_r),$$ a contradiction. 
Assume that $j=9$. Then $r\ge 11$ and $\epsilon=+$. So  $|H|_r=  (q^9-1)_r$ and  hence $\log_r(|H|_r)\leq q^9-1\leq q^9.$  Now $2m\ge 2q^{9}(q^2-1)>q^9\ge \log_r(|H|_r)$. 
For the remaining values of  $j$, $8\leq j\leq 18$ and $j$ is even. So  $r\ge 11$ and $|H|_r=(q^j-1)_r=(q^{j/2}+1)_r$ and  so $\log_r(|H|_r)\leq q^{j/2}+1\leq q^9+1\leq 2q^9.$ Hence $2m\ge 2q^{9}(q^2-1)>2q^9\ge \log_r(|H|_r)$, a contradiction.

(1b) Case $r=p.$ 
 By \cite[Table 5.4.C]{KL}, the smallest dimension of nontrivial irreducible projective representations of ${\rm E}^\epsilon_6(q)$ over $\mathbf{k}=\overline{\FF}_p$ is $27$. By \cite[Lemma 4.2]{GT05}, $|N|=p^m\ge p^{36f}$, whence $2m\ge 54f$. Now $|H|_p=p^{36f}$ and thus since $2m\ge 54f>36f$, whence $p^{2m}\nmid |H|$.

\smallskip\noindent
\textbf{Case $H\cong {\rm E}_7(q),q=p^f$}.  The argument for this case is similar, we skip the details.

\smallskip\noindent
\textbf{Case $H\cong {\rm F}_4(q),q=p^f$}.

\smallskip

(1) Assume $G$ is  quasisimple  with $N=Z(G)$.
If $q>2$, then  the Schur multiplier of $\mathrm{F}_4(q)$ is trivial (see \cite[Theorem 5.1.4]{KL}). For $q=2,$ the universal cover of ${\rm F}_4(2)$ is $2.{\rm F}_4(2)$ and so $G\cong 2.{\rm F}_4(2)$. By GAP \cite{GAP}, $2.{\rm F}_2(2)$ has a faithful irreducible character $\chi$ of degree $52$ and the smallest nontrivial character degree of $H$ is $833$. Thus $52/2=26\not\in \cod(H)$, contradicting Lemma \ref{lem:quasisimple}. 

(2) Assume $C_G(N)=N$.
By Lemma \ref{lem:2m}, $N$ is a faithful irreducible $\FF_r H$-module and $r^{2m}\mid |H|$. It follows that $2m\leq \log_r(|H|_r).$ We have     \[|H|=q^{24}(q^2-1)(q^6-1)(q^8-1)(q^{12}-1).\]

\smallskip

(2a) Case  $r\neq p$. 
For $q=2$, we have $m\ge 52$ by \cite{HissMalle}. Since $|{\rm F}_2(2)|=2^{24}\cdot 3^6\cdot 5^2\cdot 7^2\cdot 13\cdot 17,$  $r^{2m}$ cannot divide $|H|$. So we assume that $q>2$. By \cite[Theorem 5.3.9]{KL}, we have $m\ge q^6(q^2-1)$ if $q$ is odd and $m\ge q^7(q^3-1)(q-1)/2$ if $q>2$ is even. Note that $q^7(q^3-1)(q-1)/2>q^6(q^2-1)$. Let $j=d_q(r)$. Then  $j$ divides $8$ or $12$.

(i) Assume  $r=2$. Then $q$ is odd and $2\mid (q-1)$, so $j=1$. Here $|H|_2=2^3(q^2-1)_2^4$. So $$\log_2(|H|_2)=3+4\log_2((q^2-1)_2)<4+4(q^2-1)=4q^2.$$ Now $2m\ge 2q^6(q^2-1)>4q^2 \ge \log_2(|H|_2)$, a contradiction.

\smallskip
(ii) Assume  $r\ge 3$ is odd.

Case $j=1$. Then $|H|_r= (q-1)_r^{4}(6\cdot 12)_r\leq r^2(q-1)_r^4$ and  so $$\log_r(|H|_r)\leq  2+4\log_r((q-1)_r)\leq 2+4(q-1).$$ Clearly $2m\ge 2q^6(q^2-1)>2+4(q-1)>\log_r(|H|_r)$, a contradiction. 

Case $j=2$. Then $|H|_r= (q^2-1)_r^{4}(6\cdot 12)_r\leq r^2(q^2-1)_r^4$ and  so $$\log_r(|H|_r)\leq  2+4\log_r((q^2-1)_r)\leq 2+4(q^2-1).$$ Clearly $2m\ge 2q^6(q^2-1)>2+4(q^2-1)>\log_r(|H|_r)$, a contradiction

For the remaining cases, since $j\ge 3$, $r\ge 5$, $r$ divides neither $8$ nor $12$.  If $j\leq 6$, then $|H|_r\leq (q^j-1)_r^3$; therefore, $\log_r(|H|_r)\leq 3\log_r(q^j-1)\leq 3(q^6-1)<3q^6$ and hence $2m\ge 2q^6(q^2-1)>3q^6\ge \log_r(|H|_r)$.
Finally, if $7\leq j\leq 12$, then $j=8$ or $12$ and hence $$\log_r(|H|_r)\leq \log_r((q^j-1)_r)\leq \log_r((q^{j/2}+1)_r)\leq q^{j/2}+1\leq q^6+1.$$ Now $2m\ge q^6(q^2-1)>q^6+1>\log_r(|H|_r)$, which is a contradiction.

\smallskip

(2b) Case $r=p.$  By \cite[Table 5.4.C]{KL}, the smallest dimension of nontrivial irreducible representations of ${\rm F}_4(q)$ over $\overline{\FF}_p$ is $26-\delta_{p,3}\ge 25$. By \cite[Lemma 4.2]{GT05}, $|N|=p^m\ge p^{25f}$, whence $m\ge 25f$. Now $|H|_p=p^{24f}$ and thus $2m\ge 50f>24f=\log_p(|H|_p)$, a contradiction.

\smallskip\noindent
\textbf{Case $H\cong  {\rm E}_8(q),  {\rm G}_2(q) (q>2), {}^3{\rm D}_4(q),q=p^f$}. The arguments for these cases are similar to the previous one. We  omit the details.

\smallskip\noindent
\textbf{Case $H\cong {}^2{\rm B}_2(q),{}^2{\rm F}_4(q),{}^2{\rm G}_2(q)$}. These cases have been verified elsewhere.
For completeness, we include the proofs here.

By \cite[Theorem 5.1.4]{KL}, the Schur multiplier of $H$ is trivial except for ${}^2{\rm B}_2(8)$, where its Schur multiplier is a Klein four-group $\ZZ_2\times \ZZ_2$. By GAP \cite{GAP}, the quasisimple group $2.{}^2{\rm B}_2(8)$ has a faithful irreducible character of degree $40$ but $40/2=20$ is not a degree of ${}^2{\rm B}_2(8)$. By Lemma \ref{lem:quasisimple}, $G$ cannot be a $2.{}^2{\rm B}_2(8)$. Thus $C_G(N)=N$ and so by Lemma \ref{lem:2m}, we have $2m\leq \log_r(|H|_r)$.

\smallskip
(i) Assume $H\cong {}^2{\rm B}_2(q)$ with $q=2^{2n+1},n\ge 1$. We have $$| {}^2{\rm B}_2(q)|=q^2(q-1)(q^2+1)=q^2(q-1)(q-2^{n+1}+1)(q+2^{n+1}+1).$$ 
Note that the last three factors are pairwise coprime.

Assume $r=2$. By \cite[Table 5.4.C]{KL}, the minimal module for $H$ is of dimension $4$ and so by \cite[Lemma 4.2]{GT05},  $|N|\ge q^4$ or $m\ge 4(2n+1)$. Hence $2m\ge 8(2n+1)>2(2n+1)=\log_2(|H|_2)$. 

Assume that $r\ge 3$ is odd. Assume that $n=1$. Then $H={\rm B}_2(8)$ and so by \cite[Table 5.3.A]{KL}, $m\ge 8.$ Now $|H|=2^6\cdot 5\cdot 7\cdot 13$ and thus $r^{2m}$ cannot divide $|H|$ for any odd prime $r$. Thus we can assume $n\ge 2$ and so $m\ge 2^n(q-1)$ by \cite[Table 5.3.A]{KL}.  For each odd prime $r$ dividing $|H|$, $r$ divides exactly one of the last three factors in   the order formula for $H$ above. Hence $|H|_r\leq q+2^{n+1}+1.$ However, we can check that $2m\ge 2^{n+1}(q-1)>q+2^{n+1}+1\ge \log_r(|H|_r)$.

\smallskip
(ii) Assume $H\cong {}^2{\rm G}_2(q)$ with $q=3^{2n+1},n\ge 1$. Note that \[|{}^2{\rm G}_2(q)|=q^3(q-1)(q^3+1)=q^3(q-1)(q+1)(q^2-q+1).\]

Assume that $r=3$. As in the previous case, note that the minimal module for $H$ is of dimension $7$, so $m\ge 7(2n+1)$. Hence $2m\ge 14(2n+1)>3(2n+1)=\log_3(|H|_3)$.

Assume that $r\neq 3.$ By \cite[Table 5.3.A]{KL}, $m\ge q(q-1)$. Assume $r=2$. Then $|H|_2=(q^2-1)_2=8$ since $q^2\equiv 9$ (mod 16). Since $q\ge 27$, it is clear that $2m\ge 2q(q-1)>3$ and thus $2^{2m}$ does not divide $|H|$. Now assume $r>2$. Then $r\ge 5$ and thus $r$ divides exactly one of the factors $q-1,q+1$ and $q^2-q+1$. Hence $|H|_r\leq q^2-q+1$. Now $2q(q-1)-(q^2-q+1)=q^2-q-1=q(q-1)-1>0$ since $q\ge 27$. Therefore, $2m\ge 2q(q-1)>\log_r(|H|_r)$.

\smallskip
(iii) Assume $H\cong {}^2{\rm F}_2(q)$ with $q=2^{2n+1},n\ge 1$. So $q\ge 8.$
We have  \[|{}^2{\rm F}_4(q)|=q^{12}(q-1)(q^3+1)(q^4-1)(q^6+1).\]

Assume $r=2$. As in the previous case,  the minimal module for $H$ is of dimension $26$, so $m\ge 26(2n+1)$. Hence $2m\ge 52(2n+1)>12(2n+1)=\log_2(|H|_2)$, a contradiction.

Assume that $r\ge 3.$ By \cite[Table 5.3.A]{KL}, $m\ge 2^nq^4(q-1)$. Let $j=d_q(r)$. Then $j\in\{1,2,4,6,12\}.$ 

Case $j=1,2,4$. Then $|H|_r\leq   (q^j-1)_r^{2}(3\cdot 6)_r\leq r^2(q^j-1)_r^2$ and  so $$\log_r(|H|_r)\leq  2+2\log_r((q^j-1)_r)\leq 2+2(q^2+1)\leq 4q^2.$$ Now $2m\ge 2^{n+1}q^{4}(q-1)>4q^2\ge \log_r(|H|_r)$. 
Assume $j=6$. Then $r\ge 7$ and $|H|_r\leq   (q^3+1)_r= (q^2-q+1)_r$ and  so $\log_r(|H|_r)\leq   q^2-q+1\leq 2q^2.$ Now $2m\ge 2^{n+1}q^{4}(q-1)>2q^2\ge \log_r(|H|_r)$. 
Assume $j=12$. Then $r\ge 13$ and $|H|_r\leq   (q^6+1)_r= (q^4-q^2+1)_r$ and  so $\log_r(|H|_r)\leq   q^4-q^2+1\leq 2q^4.$ Now $2m\ge 2^{n+1}q^{4}(q-1)>2q^4\ge \log_r(|H|_r)$. 
The proof is now complete.
\end{proof}

\section{Projective special linear groups}\label{sec5}

We assume $H\cong \PSL_n(q)$ with $n\ge 4$ and $q=p^f$, where $p$ is a prime and $f\ge 1$. Moreover, we assume $H$ is not isomorphic to $\PSL_4(2)$ as $\PSL_4(2)\cong \Alt_8$. By \cite[Theorem 5.1.4]{KL}, the Schur multiplier $M(H)$ of $H$ is cyclic of order $d=\gcd(n,q-1)$. So $L=\SL_n(q)$ is the universal cover of $H$.

\begin{thm}\label{th:Ln}
Let $H=\PSL_n(q)$, where $n\ge 4,f\ge 1$ and $(n,q)\neq (4,2).$ Let $G$ be a finite group such that $\cod(G)=\cod(H)$. Then $G\cong H.$
\end{thm}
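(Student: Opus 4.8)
The strategy follows the four-step template already set up in the excerpt, specialized to $H=\PSL_n(q)$. Let $G$ be a counterexample of minimal order, so $\cod(G)=\cod(H)$ but $G\not\cong H$. By \cite[Theorem 8.3]{HM} and Lemma \ref{lem:minimal-normal}, $G$ is perfect with a unique minimal normal subgroup $N$ that is elementary abelian of order $r^m$, and $G/N\cong H$; moreover either $G$ is quasisimple with $N=Z(G)$ of prime order $r\mid d$, or $C_G(N)=N$ and Lemma \ref{lem:2m} applies, giving $r^{2m}\mid|H|$ and $m\leq\tfrac12\log_r(|H|_r)$. \textbf{Step 1} disposes of the quasisimple case: for each prime $r\mid d=\gcd(n,q-1)$ one must exhibit a faithful irreducible $\chi$ of the corresponding cover $r.\PSL_n(q)$ with $\chi(1)/r\notin\cd(H)$. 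Here the natural candidate is a Weil-type character of $\SL_n(q)$ (or the semisimple characters of degree $\tfrac{q^n-1}{q-1}$ and nearby), whose degree is not divisible by the relevant power after dividing by $r$; L\"ubeck's tables \cite{Lubweb} provide the bookkeeping, exactly as in Lemma \ref{lem:Schur-cover}.

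\textbf{Step 2} handles $r\neq p$: using the Landazuri--Seitz--Zalesskii bound from \cite[Table 5.3.A]{KL} one has $m\geq e(H)=\frac{q^n-q}{q-1}-\kappa$ for a small constant $\kappa\in\{1,2\}$, so $2m\geq 2(q^{n-1}-1)$ roughly. On the other hand, writing $j=d_q(r)$ and applying Lemma \ref{lem:Sylow}, the Sylow $r$-subgroup of $\PSL_n(q)$ has order at most $r^{\lfloor n/j\rfloor}\cdot(q^j-1)_r^{\lfloor n/j\rfloor}$ up to the contribution of $d$, which gives $\log_r(|H|_r)\leq \frac{n}{j}\log_r(q^j-1)+\log_r n+\log_r d \ll q^{n-1}$ for all relevant $n,q$. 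Comparing, $2m>\log_r(|H|_r)$, a contradiction; the finitely many small cases (small $e(H)$) are checked in GAP/MAGMA. This is routine given the machinery already assembled.

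\textbf{Steps 3 and 4} are the heart, and constitute the main obstacle. When $r=p$, the bound $m\geq Df=(n-1)f$ (or $nf$ for the natural module itself) from \cite[Lemma 4.2]{GT05} is \emph{not} enough to contradict $p^{2m}\mid|H|$, since $|H|_p=p^{n(n-1)f/2}$ and $2(n-1)f$ can be much smaller. The plan is: first use the classification of low-dimensional irreducible $k\SL_n(q)$-modules in \cite{GT99,Lubweb} together with Steinberg's twisted tensor product theorem to show that the only way $\dim N = m$ can be small enough to satisfy $m\leq\tfrac12\log_p(|H|_p)=\tfrac{n(n-1)f}{4}$ is if every $kL$-constituent of $N\otimes_{\FF_p}k$ is a Frobenius twist of the natural module $V$ or its dual $V^*$ (so $N$ is, up to twist, a sum of copies of $V$ and $V^*$); in fact minimality of $m$ forces $N$ itself (as $\FF_p L$-module) to be the natural module for $\SL_n(q)$, i.e.\ $m=nf$ and $N\cong\FF_q^n$ as an $\FF_p\SL_n(q)$-module. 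Then, since $H^2(\SL_n(q),V)=0$ (the relevant vanishing of group cohomology for the natural module, away from the excluded small cases), the extension $1\to N\to G\to \SL_n(q)\to 1$ splits, so $G\cong \FF_q^n\rtimes\SL_n(q)$ (note $G/N\cong\PSL_n(q)$ forces a slightly more careful analysis of which cover of $H$ acts, but the action factors through $\SL_n(q)$ up to the center acting by scalars). Finally, in Step 4 one studies the $G$-orbits on $\Irr(N)\cong N^*$: the natural action of $\SL_n(q)$ on $\FF_q^n\setminus\{0\}$ is transitive with point stabilizer a maximal parabolic $P_{n-1}$, so there is essentially a single nontrivial orbit of size $q^n-1$, whose stabilizer $T$ satisfies $|G:T|=q^n-1$; inducing a linear character of $N$ extended suitably through $P_{n-1}$ produces a faithful $\chi\in\Irr(G)$ with $\chi(1)=(q^n-1)\cdot\psi(1)$ for $\psi\in\Irr(P_{n-1}/N\cdot\text{(something)})$, and one arranges $\psi(1)$ coprime to $p$ (e.g.\ $\psi$ trivial on the unipotent radical, a character of $\GL_{n-1}(q)$ of degree prime to $p$), so that $p\nmid\chi(1)$; but Lemma \ref{lem:2m}(ii) then demands $p^m\mid\chi(1)$ with $m=nf\geq 1$, the final contradiction.

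\textbf{Expected difficulties.} The delicate part is Step 3: pinning down, via \cite{GT99,Lubweb} and the twisted tensor product theorem, that minimality of $|G|$ really forces $N$ to be \emph{exactly} the natural module (rather than some twist, tensor product, or a module built from $\Lambda^2 V$ or the adjoint module) requires carefully ruling out all irreducible $kL$-modules of dimension below the threshold $\tfrac{n(n-1)f}{4}$ other than twists of $V,V^*$ — this uses the gap between $\dim V=n$ and the next smallest module $\dim\Lambda^2 V=\binom n2$, plus the fact that tensoring distinct Frobenius twists multiplies dimensions. A secondary subtlety is the cohomological input $H^2(\SL_n(q),\FF_q^n)=0$: this holds in the generic range but has known exceptions for very small $n$ and $q$ (notably $\SL_3(q)$, $\SL_4(2)$, and a few others), which is precisely why the hypotheses exclude $(n,q)=(4,2)$ and restrict to $n\geq 4$; these boundary cases must be swept up by a direct GAP computation. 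Everything in Steps 1, 2, and the final orbit-counting of Step 4 is then comparatively mechanical.
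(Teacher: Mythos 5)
Your plan follows the paper's four-step template closely, and Steps 1–3 are essentially what the paper does (Weil characters of $\SL_n(q)$ for the quasisimple case, the Guralnick–Tiep cross-characteristic lower bound $m\ge (q^n-q)/(q-1)-\kappa_n$ versus $\log_r|H|_r$, then the twisted tensor product argument reducing $N$ to the natural module). However, in Step 4 there is a real gap that the paper handles and your parenthetical does not: once $N$ is shown to be the natural $n$-dimensional module over $\FF_q$, the group $H=\PSL_n(q)$ must be an irreducible subgroup of $\GL_m(\FF_p)$ with $m=nf$, and the paper must argue that this forces $H\cong\SL_n(q)$, i.e.\ $\gcd(n,q-1)=1$ --- otherwise $\PSL_n(q)$ cannot act faithfully on the natural module at all. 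Your remark that ``the action factors through $\SL_n(q)$ up to the center acting by scalars'' does not resolve this; if $Z(\SL_n(q))\ne 1$ one does not get a faithful $\PSL_n(q)$-action on the natural module, so one must show the center vanishes. The paper does this by observing that $H$, being a perfect irreducible subgroup of $\GL_m(\FF_p)$, lies in $\SL_m(\FF_p)$, and for $f=1$ invokes quasisimplicity of $\SL_n(p)$ while for $f>1$ it needs a primitive prime divisor of $p^{nf}-1$ together with the Bereczky/GPPS classification of overgroups of Singer elements to pin $H$ down as a $\CCC_3$-subgroup isomorphic to $\SL_n(q)$.

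You also mix up the boundary cases. The vanishing $H^2(\SL_n(q),V)=0$ for the natural module fails (per Bell's table, for $n\ge 5$) precisely for $\SL_5(2)$, not $\SL_4(2)$; the paper eliminates the resulting nonsplit $2^5.\GL_5(2)$ (the Dempwolff group) by a direct GAP check. The hypothesis excluding $(n,q)=(4,2)$ has nothing to do with cohomology: it is there because $\PSL_4(2)\cong\Alt_8$, which is covered by the alternating-group case of the Codegrees Conjecture. Moreover, in the paper's Step 3 the inequality $2m\le fn(n-1)/2$ together with $m\ge nf$ already forces $n\ge 5$, so the defining-characteristic analysis never sees $n=4$ at all --- something your plan does not notice but which streamlines the argument.
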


\begin{proof}
Let $G$ be a counterexample to the theorem with minimal order.  Then $\cod(G)=\cod(H)$ but $G\not\cong H$.  By  Lemma \ref{lem:minimal-normal}, $G$ is perfect,  $G$ has a normal elementary abelian $r$-subgroup $N$  of order $r^m$ and $G/N\cong H$. Hence either $G$ is a quasisimple group with center $N$ of order $r$ or $C_G(N)=N$.

\smallskip

(1) Assume that $G$ is quasisimple with center $N\cong \ZZ_r.$  Recall that the center of $\SL_n(q)$ is cyclic of order $d=\gcd(n,q-1)$ and that $Z(\SL_n(q))$ has a unique subgroup of order $e$ for each divisor $e\ge 1$ of $d$. Since $G$ is a central quotient of $\SL_n(q)$, we deduce that $r\mid d$ and $Z(\SL_n(q))$ has a unique subgroup $Z$ of order $d/r$ and that $\SL_n(q)/Z\cong G$.

If $q=2$, then $d=\gcd(n,q-1)=1$. Therefore, we assume $q>2$ and $d>1$. We know that $\SL_n(q)$ has $q-1$ Weil characters $\tau_{n,q}^i$ (see \cite{TZ2}) with $0\leq i \leq q-2$ of degree $(q^n-1)/(q-1)$ if $1\leq i\leq q-2$ and $(q^n-q)/(q-1) $ if $i=0$.  Using the formula for $\tau_{n,q}^i$ (see \cite{Tiep15}),  the kernel of $\chi=\tau_{n,q}^r$  is a subgroup of $Z(\SL_n(q))$ of order $d/r$, and thus $\chi$ is a faithful irreducible character of $\SL_n(q)/\ker(\chi)\cong G$.

Since $\chi\in\Irr(G)$ is faithful, $\cod(\chi)=|G|/\chi(1)=|H|/\phi(1)$ for some $\phi\in\Irr(H).$ Thus $\chi(1)=r\phi(1)$. Here $\phi$ is nontrivial as $\chi(1)=(q^n-1)/(q-1)>r.$   Assume  $H\cong \LL_4(3)$. Then $G\cong \SL_4(3)\cong 2.\LL_4(3)$ and by using GAP \cite{GAP}, $G$ has a faithful irreducible character of degree $40$ but $40/2=20$ is not a degree of $H$. So, assume $(n,q)\neq (4,3).$
By \cite[Theorem 1.1]{TZ1}, we have $\phi(1)\ge (q^n-q)/(q-1)$ and hence $(q^n-1)/(q-1)=r\phi(1)\ge r(q^n-q)/(q-1)$ which implies that $(q^n-1)\ge r(q^n-q)$ or equivalently $(r-1)(q^n-q)\leq q-1$, which is impossible as $q\ge 3,r\ge 2$ and $n\ge 4$.

\smallskip
(2) Assume  $C_G(N)=N$.
By Lemma \ref{lem:2m}, $N$ is an irreducible $\FF_rH$-module and  $r^{2m}$ divides $ |H|$, hence $2m\leq \log_r(|H|_r)$.

\smallskip
(2a)  Case $r\neq p$. Since $H$ has a faithful irreducible representation of dimension $m$ in cross characteristic $r$, by \cite[Theorem 1.1]{GT99}, we have  $m\ge (q^n-q)/(q-1)-\kappa_n$ if $H$ is not isomorphic to $\LL_6(2)$ nor $\LL_6(3)$; for the exception, we have $m\ge 62-\kappa_6$ and $m\ge 363-\kappa_6$, respectively,
 where $\kappa_n=1$ if $r$ divides $ (q^n-1)/(q-1)$ and $0$ otherwise. 
  
 Assume first that $H\cong \LL_6(2)$.  Here $r>2$,  $|H|=2^{15}\cdot 3^4\cdot 5\cdot 7^2\cdot 31$ and clearly $2m>\log_r(|H|_r)$. Similarly, for  $H\cong \LL_6(3)$, $r\neq 3$, $|H|=2^{11}\cdot 3^{15}\cdot 5\cdot 7\cdot 11^2\cdot 13^2$ and   $2m>\log_r(|H|_r)$. Thus we can assume that  $$m\ge (q^{n}-q)/(q-1)-1=(q^n-2q+1)/(q-1)\ge q^{n-1}-1.$$

(i) Assume $r=2$. Then $q$ is odd and $2\mid (q-1)$. 
Assume $q\equiv 1$ mod $4$. By \cite[Table 2]{Stather} or Lemma \ref{lem:Sylow},  $|H|_2\leq (q-1)_2^{n-1}(n!)_2\leq 2^n(q-1)^n$.  Hence $$\log_2(|H|_2)\leq n+n\log_2(q-1)\leq n+n(q-1)=nq.$$ Since $n\ge 4,$  $q^{n-2}> 2n$ for any $q\ge 3.$ Hence $$2m\ge 2(q^{n-1}-1)\ge 2(q-1)q^{n-2}>4n(q-1)>nq\ge \log_2(|H|_2),$$ which is a contradiction.

Assume $q\equiv 3$ mod $4$ and $n=2m+1.$ Then $$|H|_2\leq |\GL_n(q)|_2/(q-1)_2\leq (m!)_24^m(q+1)_2^m<2^{3m}(2q)^m=2^{4m}q^m.$$ Hence $\log_2(|H|_2)\leq 4m+m\log_2(q)\leq 2n+nq\leq n(q+2)$.  As in the previous case, $2m>4n(q-1)>n(q+2)\ge \log_2(|H|_2)$. The remaining case is similar.

(ii) Assume $r>2$.
Note that $r$ divides $ \prod_{i=1}^n(q^i-1)$.  Let $j=d_q(r)$ be the order of $q$ modulo $r$. Then $j$ is the smallest positive integer such that $r$ divides $q^j-1$. Now $|H|_r=(q^j-1)^k_r(k!)_r$, where $k=\lfloor n/j\rfloor.$
Since $r^{2m}$ divides $ |H|$,  $r^{2m}\leq |H|_r=(q^j-1)^k_r(k!)_r$. 


 Assume  $j=n$.  Then $n\mid (r-1)$ and so $r\ge n+1$. Now $|H|_r=(q^n-1)_r$. Clearly, $r\nmid (q-1)$ so $r\mid (q^n-1)/(q-1)$. It follows that $\log_r(|H|_r)\leq |H|_r\leq (q^n-1)/(q-1)$. However, $2(q^n-2q+1)/(q-1)> (q^n-1)/(q-1)$ if and only if $q^n> 4q-3$, which is true since $q\ge 2$ and $n\ge 4.$ Therefore, $2m\ge (q^n-2q+1)/(q-1)> (q^n-1)/(q-1)\ge \log_r(|H|_r)$.

 Assume  $n/2<j\leq n-1.$ Then $k=\lfloor n/j\rfloor=1$ and $|H|_r=(q^j-1)_r.$ With the same argument as above, we get $2m\ge 2(q^{n-1}-1)>q^{n-1}-1\ge \log_r(|H|_r)$, a contradiction.

 Assume $1\leq j\leq n/2$.  Then $$\log_r(|H|_r)\leq k\log_r(q^j-1)+\log_r((k!)_r)\leq n/j(q^j-1)+n/j=nq^j/j\leq nq^j\leq nq^{n/2}.$$
 Since $2m\ge 2(q^{n-1}-1)>q^{n-1}$,  it suffices to show that $q^{n-1}>nq^{n/2}$ or equivalently $q^{n-2}>n^2$ for all integers $n\ge 4$ and $q\ge 2$. 
If $q\ge 5$, then $q^{n-2}\ge 5^{n-2}>n^2$ by induction on $n\ge 4.$ So, assume $2\leq q\leq 4.$ We have $q^{n-2}>n^2$ except for the following cases: $q=2 $ and $5\leq n\leq 8$; $q=3$ and $n=4$ or $q=4$ and $n=4.$ For these cases, direct calculation shows that ${2m}>\log_r(|H|_r)$ for every prime divisor $r\neq p$ of $|H|$.

\smallskip
(2b)  Case $r=p.$ Recall that $H\cong \LL_n(q)$, $L\cong \SL_n(q)$ and $L/Z(L)\cong H$, where $q=p^f$. Let $F=\FF_p$, $E=\FF_q$ and  $\mathbf{k}=\overline{\FF}_q$.   Note that $N$ is a faithful irreducible $FH$-module with $\dim_F(N)=m.$

Let $K$ be field extension of $F$ and let $U$ be an irreducible  $KH$-module. Then $U$ is also an irreducible $KL$-module, where the center of $L$ acts trivially on $U.$ Consequently, if $U$ is an absolutely irreducible $KH$-module, then $U$ is an absolutely irreducible $KL$-module. 
Assume $K$ is a finite field extension of $F$ and let $\phi\in \Gal(K/F)$. For each $KH$-module $W$, one can define a $KH$-module $W^\phi$ such that $W$ is irreducible if and only if $W^\phi$ is irreducible and $\dim_K(W)=\dim_K(W^\phi)$ (see \cite[VII.1.13]{HB} or \cite[p. 48]{KL}).

 By the discussion above, $N$ is an irreducible $FL$-module.
Since $L$ is a finite Lie-type group of simply connected type defined over $E=\FF_q$,  by \cite[Table 5.4.C]{KL} the smallest dimension of nontrivial irreducible representations of $L$ over  $\mathbf{k}$ is $n$. Hence it follows from  \cite[Lemma 4.2]{GT05} that  $|N|=p^m \ge q^n$ (we can also get this bound using the existence of primitive prime divisors of $p^{nf}-1$ in $H$, which always exists by Zsigmondy's theorem except for the case $(n,q)=(6,2)$.) It follows that $m\ge nf.$
By Lemma \ref{lem:2m}, we have $2m\leq \log_p(|H|_p)=fn(n-1)/2$.  Hence $2nf\leq fn(n-1)/2$, which implies that $n\ge 5.$

By \cite[Proposition 5.4.4]{KL}, $E$ is a splitting field for $L.$
 Let $\rho:L\rightarrow  \GL_m(F)$ be an $F$-representation of $L$ afforded by $N$.  
 Let $K$ be the centralizer in $\textrm{M}_m(F)$ of $\rho(L)$.  Let $\chi$ be the character of an  irreducible constituent of  $\rho^E$ ($\rho^E$ is just $\rho$ viewed as an $E$-representation of $L$.) By Problems $9.7$ and $9.8$ in \cite{Isaacs},  $K=F(\chi)$ is a finite field extension of $F$ of degree $t$, where $t$ divides $f$ as $F\subseteq K\subseteq E$. Let $W$ be a $KL$-module affording $\chi$.
By Theorems 9.21 and 9.14 in \cite{Isaacs}  (see also \cite[Lemma 2.10.2]{KL} applies to $H\leq GL(N,F)$), $W$ is an absolutely irreducible $KL$-module, $N\otimes_F K\cong \oplus_{i=1}^tW^{\psi^i}$ and $\langle \psi\rangle=\Gal(K/F)$.
Observe that $$m=\dim_F(N)=\dim_K(N\otimes_F K)=t\dim_K(W).$$ Hence $$|W|=|K|^{\dim_K(W)}=|F|^{t\dim_K(W)}=|F|^m=|N|.$$ Next, let $p^e$ be the smallest power of $p$ such that $W$ is absolutely irreducible over $\FF_{p^e}$.
Here $L$ is  simply connected of Lie type defined over $\FF_{p^f}$ and $W$ is an absolutely irreducible $\FF_{p^e}L$-module which is realized over no proper subfield of $\FF_{p^e}$.
By \cite[Proposition 5.4.6(i)]{KL},  $e$ divides $f$ and there exists an irreducible $\mathbf{k}L$-module $M$ such that \[W\otimes \mathbf{k}\cong M\otimes M^{(e)}\otimes \cdots\otimes M^{(f-e)}.\]
In particular, $\dim_{\FF_{p^e}}(W)=\dim_{\mathbf{k}}(W\otimes \mathbf{k})=\dim_{\mathbf{k}} (M)^{f/e}$.  Hence $$p^m=|N|= |W|= p^{e\dim_{\mathbf{k}}(M)^{f/e}}.$$ Thus $m= e\dim_{\mathbf{k}}(M)^{f/e}$. Let $s=f/e$.
Following \cite{GT05}, we call the Frobenius twists of restricted modules of groups of Lie type `restricted'.   Note that $\dim_{\mathbf{k}}(M)\ge n$.

Assume $s\ge 3$. Here $m\ge en^s.$ We  claim that $en^s\ge n^2f$. This is equivalent to $n^{s-2}\ge s$, where $n\ge 5$ and $s\ge 3$. This can be proved easily by induction on $s\ge 3.$ Hence $2fn^2\leq 2m\leq fn(n-1)/2$, which is equivalent to $4n\leq n-1$, which is impossible.

Assume $s=2$. Assume $M$ is not restricted. Then $\dim_{\mathbf{k}}(M)\ge n^2$ and hence $m\ge 2n^2f$. In this case, $2m\ge 4n^2f>fn(n-1)/2$.  Assume $M$ is restricted. Then $W\otimes {\mathbf{k}}\cong M\otimes M^{(f/2)}$ and the smallest field over which $W$ can be realized is $\FF_{p^{f/2}}.$  Again, we get $m\ge en^2=n^2f/2$, whence $2m\ge n^2f>n(n-1)f/2= \log_p(|H|_p)$, a contradiction.

Assume $s=1$. Here $e=f.$ If $M$ is a tensor product of at least two nontrivial restricted irreducible $\mathbf{k}L$-modules, then $m\ge n^2f$ and we get a contradiction as in the previous cases. Assume $M$ is a restricted irreducible $\mathbf{k}L$-module.
Then  $m= f\dim_{\mathbf{k}}(M)$.  Assume first that $\dim_{\mathbf{k}}(M)>n$. By  \cite{Lubeck} or \cite[Table 5.4.A]{KL}, $\dim_{\mathbf{k}}(M)\ge n(n-1)/2$ and thus $2m\ge fn(n-1)>fn(n-1)/2= \log_p(|H|_p)$, a contradiction.
We are left with the case when $W\otimes \mathbf{k} \cong M$ is a restricted $\mathbf{k} L$-module with  $\dim_{\mathbf{k}}(W)=n$ and hence $M$ is quasi-equivalent to the natural module for $L$. It follows that $|N|=|W|=q^n$. 

\smallskip
Recall that if $X$ is a finite group and $A$ is an $X$-module, let $H^2(X,A)$ be the second cohomology group of $X$ with coefficients in $A$. It is well-known that $H^2(X,A)$ is nonzero if and only if there is a nonsplit extension of $A$ by $X.$ 
From \cite[Table I]{Bell}, we know that since $n\ge 5,$ $H^2(\SL_n(q),V)$ is trivial, where $V$ is the natural module for $\SL_n(q)$, except for the case $H\cong \SL_5(2)$. 

Assume that $H\cong \SL_5(2)$ and $N$ is not quasi-equivalent to the natural module for $H$ so that $G$ is isomorphic to the  nonsplit extension $2^5.\GL_5(2)$ which is called the Dempwolff group and appears as a maximal subgroup of the Thompson sporadic simple group Th.
By GAP \cite{GAP}, we can check that this group has a faithful irreducible character $\chi$ of degree $248=2^3\cdot 31.$ Clearly $\chi(1)/2^5$ is not a character degree of $H$ and thus this case cannot occur.
By using GAP \cite{GAP}, we can also assume that $H\not\in\{\LL_5(2),\LL_6(2)\}$. 

Assume $f=1$. Then $N$ is quasi-equivalent to the natural module for $\SL_n(p)$ and hence $\SL_n(p)$ has exactly one nontrivial orbit on $N$ and since $N\cong \Irr(N)$, $\SL_n(p)$ has exactly one nontrivial orbit on $\Irr(N)$ of size $p^n-1$. Since $H$ is perfect and embeds into $\GL_n(p)$, we see that $H$ is isomorphic to a subgroup of $\SL_n(p)$. Since $\SL_n(p)$ is quasisimple as $n\ge 5,$ and contains a subgroup isomorphic to $\LL_n(p)$, it forces $H\cong \SL_n(p)$. In particular, $\gcd(n,p-1)=1$.

Therefore, $G= N:H_0$ with $H_0\cong H\cong\SL_n(p)$. Furthermore, since $N\cong \Irr(N)$, $G$ has exactly one nontrivial orbit $\mathcal{O}$ on $\Irr(N)$ of size $p^n-1.$ Let $\theta\in\mathcal{O}$ and  $T=I_G(\theta)\cong N:T_0,$ where $T_0$ is  the stabilizer of a vector in the natural module $N$. By \cite[Proposition 4.1.17]{KL}, we deduce that $T_0\cong C:A$, where $C$ is an elementary abelian $p$-group of order $p^{n-1}$ and $A\cong \SL_{n-1}(p)$. Since $\theta$ is $T$-invariant and $T$ splits over $N$, by \cite[Lemma 22.2]{Hupp98}, $\theta$ extends to $\theta_0\in\Irr(T)$ and so since $T_0/C\cong \SL_{n-1}(p)$, $T_0$ has an irreducible character of degree $(p^{n-1}-p)/(p-1)$ (the degree of a unipotent character of $\SL_{n-1}(p)$) and hence by Gallagher's Theorem \cite[Corollary 6.17]{Isaacs}, $T$ has an irreducible character $\phi$ lying over $\theta$ of degree $(p^{n-1}-p)/(p-1)$. It follows that $\chi=\phi^G\in \Irr(G|\theta)$ is faithful and hence by Lemma \ref{lem:2m},  $|N|=p^n$ divides $\chi(1)=|G:T|\phi(1)=(p^n-1)(p^{n-1}-p)/(p-1)$, which is impossible as $n\ge 5.$ (We can also take $\theta_0\in\Irr(I_G(\theta))$ that is an extension of $\theta$ to $I_G(\theta)$ and thus $\theta_0^G\in\Irr(G|\theta)$ with degree $p^n-1$ and get a contradiction as well.)

Now assume that $f>1$. Note that  $(nf,p)\neq (6,2)$ since $n\ge 5$ and $f\ge 2$.  Thus a primitive prime divisor of $p^{nf}-1$ exists and divides $|H|$. Since $H\cong \LL_n(p^f)$ is a subgroup of $\SL_{m}(p)=\SL_{nf}(p)$, by \cite[Section 3]{Ber} or \cite{GPPS}, $H$ must be isomorphic to a $\CCC_3$-subgroup of $\SL_m(p)$ and thus $H\cong \SL_k(p^e)$ with $ke=m=nf$. It follows that $k=n$ and $e=f$ is the only possibility and thus $H\cong \SL_n(q)$ which also forces $\gcd(n,q-1)=1$. Furthermore, $H$ acts transitively on the nonzero vectors of $N$ as a natural module for $\SL_m(p)$. (See \cite[Appendix 1]{Lie87}).
From the construction of $\CCC_3$-subgroups of $\SL_m(p)$,  $N$ can be considered  an $\FF_q$-vector space of dimension $n$ and it is in fact the natural module for $\SL_n(q)$. (We start with the natural module $N$ for $\SL_n(q)$ of dimension $n$ over $\FF_{p^f}$ and then consider $N$ as an $\FF_p$-module of dimension $m=nf$.) We can then apply the same argument as in the case $f=1$ to obtain a contradiction.
The proof is now complete.
\end{proof}

Theorem \ref{th:main1} now follows from Theorems \ref{th:exceptional} and \ref{th:Ln}.

\section{Symplectic groups}\label{sec6}

Let $L=\Sp_{2n}(q)$, where $n\ge 2$ and $q=p^f$. If $n=2$, we assume $q>2.$ Then $H=L/Z(L)\cong \SSS_{2n}(q)$ with $d=|Z(L)|=\gcd(2,q-1)$. 
Let $D$ be the dimension of the natural module for $L$, so $D=2n$, and it is also the smallest dimension of a nontrivial absolutely irreducible $\mathbf{k}L$-module, where $\mathbf{k}=\overline{\FF}_q$. Let $F=\FF_p$ and $E=\FF_q.$ We have 
\[|\SSS_{2n}(q)|=\frac{1}{d}q^{n^2}\prod_{i=1}^n(q^{2i}-1).\]
Moreover, for $(n,q)\neq (3,2)$, we have $|M(H)|=d$ while $M(\SSS_6(2))\cong \ZZ_2$ (see \cite[Theorem 5.1.4]{KL}).
In this section, we study the structure of minimal counterexamples to Conjecture \ref{conj:codegrees} for the finite simple symplectic groups and as a consequence, confirming the conjecture for $n=2$ and $n=3.$ 
Unfortunately, we cannot complete the verification of the conjecture for these groups since the second cohomology group $H^2(\Sp_{2n}(q),V)$ may not be trivial, where $V$ is the natural module for $\Sp_{2n}(q)$. In particular, the extension of $H$ by $N$ is not a split extension and thus our approach does not work.

\begin{thm}\label{th:PSp}
 Let $H=\SSS_{2n}(q)$, where $n\ge 2,q=p^f$ and $(n,q)\neq (2,2).$ Let $G$ be a counterexample to Conjecture \ref{conj:codegrees} with minimal order. Let $L=\Sp_{2n}(q)$. Then  $n\ge 4$ and the following hold.
 \begin{enumerate}
 \item $G$ is perfect and has a unique minimal normal subgroup $N$ which is an elementary abelian $p$-group.
 \item All the irreducible $\mathbf{k}L$-submodules of $N\otimes \mathbf{k}$ is quasi-equivalent to the natural module for $L$.
 \item $|N|=q^{2n}$.
 
 \end{enumerate}
\end{thm}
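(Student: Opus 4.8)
The plan is to transpose, essentially verbatim, the three-step scheme of the proof of Theorem~\ref{th:Ln} from $\PSL_n(q)$ to $\SSS_{2n}(q)$. By \cite[Theorem~8.3]{HM} a minimal counterexample $G$ has a unique minimal normal subgroup $N$ with $G/N\cong H$, and by Lemma~\ref{lem:minimal-normal} $G$ is perfect and $N$ is elementary abelian of order $r^{m}$; thus either $G$ is quasisimple with $N=Z(G)\cong\ZZ_{r}$, or $C_{G}(N)=N$. For \textbf{Step~1}, if $G$ is quasisimple then $r\mid|M(H)|$, and by \cite[Theorem~5.1.4]{KL} one has $M(H)\cong\ZZ_{d}$ with $d=\gcd(2,q-1)$, the sole exception being $M(\SSS_{6}(2))\cong\ZZ_{2}$; so $r=2$ and, apart from $2.\SSS_{6}(2)$, $\Sp_{4}(3)$ and $\Sp_{6}(3)$ (dealt with in GAP~\cite{GAP}), $q$ is odd and $G\cong\Sp_{2n}(q)$. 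Here I would take a Weil character $\chi$ of $\Sp_{2n}(q)$ of degree $(q^{n}-1)/2$: it is faithful because $-I$ acts nontrivially on the underlying Weil module, whereas $\chi(1)/2=(q^{n}-1)/4$ is either not an integer or lies strictly between $1$ and the smallest nontrivial character degree of $\SSS_{2n}(q)$ (using the known lower bound for the latter, e.g.\ \cite{TZ1}), hence $\chi(1)/2\notin\cd(H)$; Lemma~\ref{lem:quasisimple} then yields a contradiction. So $C_{G}(N)=N$, Lemma~\ref{lem:2m} applies, $N$ is a faithful irreducible $\FF_{r}H$-module, and $r^{2m}\mid|H|$, i.e.\ $2m\le\log_{p}|H|_{p}=fn^{2}$.

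\textbf{Step~2.} Suppose $r\neq p$. The Landazuri--Seitz--Zalesskii bound (\cite[Table~5.3.A]{KL}, \cite{SZ}) gives $m\ge(q^{n}-1)/2$ for $q$ odd, and a larger bound for $q$ even. Writing $j=d_{q}(r)$ and using the order formula $|H|=\tfrac{1}{d}q^{n^{2}}\prod_{i=1}^{n}(q^{2i}-1)$ together with Lemma~\ref{lem:Sylow}, one bounds $|H|_{r}\le(q^{j}-1)_{r}^{\lfloor n/j\rfloor}(\lfloor n/j\rfloor!)_{r}$ (reading the $2$-part off Lemma~\ref{lem:Sylow}(i) when $r=2$), and a short case analysis over the admissible $j$ gives $2m\ge q^{n}-1>\log_{r}|H|_{r}$, a contradiction. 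The finitely many small pairs $(n,q)$ for which the bound on $m$ is too weak are disposed of in MAGMA~\cite{magma} or GAP~\cite{GAP}. Hence $r=p$, which proves~(1).

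\textbf{Step~3.} Now $r=p$; I work with $L=\Sp_{2n}(q)$, on which $Z(L)$ acts trivially on $N$. A primitive prime divisor of $p^{2nf}-1$ (which exists once $2nf\ge 8$) divides $|H|$ and has multiplicative order $2nf$ over $\FF_{p}$, so from $H\hookrightarrow\GL_{m}(\FF_{p})$ we obtain $m\ge 2nf$ (alternatively \cite[Lemma~4.2]{GT05}); combined with $2m\le fn^{2}$ this forces $n\ge 4$, and in particular $\SSS_{4}(q)$ and $\SSS_{6}(q)$ are not counterexamples. When $q$ is odd, $Z(L)\neq 1$ and the smallest faithful irreducible $\overline{\FF}_{p}$-module of $H$ has dimension of order $n^{2}$, so (after, if necessary, the reduction below) $2m>fn^{2}$, a contradiction; hence $q$ is even and $H=L=\Sp_{2n}(q)$. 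Extending scalars to $E=\FF_{q}$, \cite[Theorem~VII.1.18(c)]{HB} and \cite[Theorem~9.21]{Isaacs} give $N\otimes_{\FF_{p}}E=W_{1}\oplus\cdots\oplus W_{t}$ with the $W_{i}$ a single $\Gal(E/\FF_{p})$-orbit of absolutely irreducible $EL$-modules, all of the same $E$-dimension and $t\mid f$; then $N\otimes_{\FF_{p}}k=\bigoplus_{i}W_{i}^{k}$ with each $W_{i}^{k}$ absolutely irreducible over $k=\overline{\FF}_{p}$. By Steinberg's twisted tensor product theorem (\cite[Proposition~5.4.6]{KL}), $W_{i}^{k}$ is a tensor product of Frobenius twists of a single restricted irreducible $kL$-module $M_{i}$, and if $e_{i}$ denotes the field-of-definition exponent of $W_{i}$ then $m=e_{i}(\dim_{k}M_{i})^{f/e_{i}}$. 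Lübeck's classification of small-dimensional modules (\cite{Lubeck}, \cite[Tables~5.4.A, 5.4.C]{KL}) shows that the only nontrivial restricted $kL$-module of $k$-dimension at most $n^{2}/2$ is the natural $2n$-dimensional one; feeding this into the displayed identity gives $2m>fn^{2}$ unless every $W_{i}^{k}$ is a Frobenius twist of the natural module, which is~(2). Finally such a twist is realized over $\FF_{q}$ but over no proper subfield, so each $W_{i}$ has $E$-dimension $2n$, the orbit has size $t=f$, and $m=\dim_{\FF_{p}}N=2nf$, which pins down $|N|$ and gives~(3).

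The decisive and most delicate part is Step~3: the whole difficulty lies in the defining-characteristic module theory — making the Steinberg--Lübeck dimension estimates uniform in $n\ge 4$ and $f$, keeping the $q$ even and $q$ odd cases apart, and excluding the few low-rank modules (for instance the spin-type modules of $\Sp_{8}(q)$ and $\Sp_{10}(q)$) lying between the natural module and the next fundamental one. Steps~1 and 2, by contrast, are routine once Lemma~\ref{lem:Sylow} and the tables of \cite{KL} are in hand.
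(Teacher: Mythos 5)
Your overall scheme mirrors the paper's proof almost step for step, and Steps~2 and~3 are fine in outline. However, Step~1 contains a genuine gap. You take the Weil character $\chi$ of $\Sp_{2n}(q)$ of degree $(q^{n}-1)/2$ and assert it is faithful ``because $-I$ acts nontrivially on the underlying Weil module.'' This reasoning is incorrect: the total Weil module of degree $q^{n}$ decomposes as $\eta^{+}\oplus\eta^{-}$ of degrees $(q^{n}\pm1)/2$, and $-I$ acts by $+1$ on one summand and by $-1$ on the other. The faithful summand is the one whose degree $(q^{n}-\alpha)/2$ satisfies $4\mid (q^{n}-\alpha)$ \(this is exactly \cite[Lemma~2.6]{TZ2}, which the paper cites\). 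When $q\equiv 3\pmod 4$ and $n$ is odd \(e.g.\ $\Sp_{6}(3),\,\Sp_{10}(3),\,\Sp_{6}(7),\dots$\) one has $q^{n}\equiv 3\pmod 4$, the degree $(q^{n}-1)/2$ is odd, and that Weil character factors through $\PSp_{2n}(q)$ — it is \emph{not} faithful, so Lemma~\ref{lem:quasisimple} cannot be applied to it. Your fallback ``$(q^{n}-1)/4$ is not an integer'' does not rescue the argument, because the non-integrality of $\chi(1)/r$ is only a contradiction once $\chi$ is known to be faithful. The infinitely many cases with $q^{n}\equiv 3\pmod 4$ cannot be farmed out to GAP. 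The fix is exactly the paper's choice: take the Weil character of degree $(q^{n}-\alpha)/2$ with $4\mid(q^{n}-\alpha)$; it is faithful, $(q^{n}-\alpha)/4\in\ZZ$, and $(q^{n}-\alpha)/4<(q^{n}-1)/2$, the minimal nontrivial degree of $H$.

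Two further remarks. First, your Step~3 explicitly derives that $q$ must be even \(since for $q$ odd the constituents of $N\otimes k$ would be $kH$-modules, hence have $Z(L)=\{\pm I\}$ in the kernel, which the natural module and its Frobenius twists do not\); the paper's proof reaches the same quasi-equivalence conclusion without invoking $Z(L)$-triviality, so your observation is a genuine sharpening that makes explicit the (vacuous) nature of the theorem for $q$ odd. You should, however, be a little more careful to justify that the minimal dimension bound $\sim 2n^{2}$ propagates to a bound on $m$ that beats $fn^{2}/2$ for \emph{all} $f$, not just small $f$; the clean way is to apply \cite[Lemma~4.2]{GT05} to the faithful $H$-modules rather than to $L$-modules. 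Second, both your proof and the paper's proof arrive at $m=2nf$ and hence $|N|=p^{2nf}=q^{2n}$, which is what $\dim_{\FF_q}(\text{natural module})=2n$ forces; the statement (3) $|N|=q^{n}$ in the theorem as printed is a typo.
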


\begin{proof}
Let $G$ be a counterexample to the theorem with minimal order.  Then $\cod(G)=\cod(H)$ but $G\not\cong H$.  By  Lemma \ref{lem:minimal-normal}, $G$ is perfect and has a unique minimal normal elementary abelian  $r$-subgroup $N$ of order $r^m$ with $G/N\cong H$. Now either $G$ is  quasisimple  with center $N$ of order $r$ or $C_G(N)=N$.

\smallskip

(1) Assume that $G$ is quasisimple with center $N\cong \ZZ_r.$  
Assume first that $H=\SSS_6(2)$. Then $G\cong 2.\SSS_6(2)$ and by using GAP \cite{GAP}, $G$ has a faithful irreducible character of degree $8$ but $8/2=4$ is not a character degree of $H$, so $G$ cannot be $2.\SSS_6(2)$ by Lemma \ref{lem:quasisimple}.
Now assume that $(n,q)\neq (3,2)$. Then $G\cong \Sp_{2n}(q)$, $r=2$ and $q$ is odd. By \cite[Lemma 2.6]{TZ2}, $\Sp_{2n}(q)$ always has a faithful irreducible character of degree $(q^n-\alpha)/2$, where $4\mid (q^n-\alpha)$ and $\alpha\in \{\pm 1\}$. Now $(q^n-\alpha)/4$ is not a degree of $H$ since the minimal nontrivial degree of $H$ is at least $(q^n-1)/2$ by \cite[Table 5.3.A]{KL}.

(2) Assume $C_G(N)=N$. 
By Lemma \ref{lem:2m}, $N$ is an irreducible $\FF_rH$-module and $r^{2m}$ divides $|H|$. Hence $2m\leq \log_r(|H|_r)$.

\smallskip
(2a) Case $r\neq p$.

Assume that $H\cong \SSS_6(2)$. Then $m\ge 7$ by \cite[Table 5.3.A]{KL}. We have $|\SSS_6(2)|=2^9\cdot 3^4\cdot 5\cdot 7$. We can see that $r^{2m}$ cannot divide $|H|$ for any odd prime $r$. Therefore, we assume $H\not\cong \SSS_6(2).$
By  \cite[Table 5.3A]{KL}),  $m\ge (q^n-1)/2$ if $q$ is odd and $m\ge q^{n-1}(q^{n-1}-1)(q-1)/2$ if $n$ is even. In both cases,  $2m\ge q^n-1.$

(i) Assume  $r=2$. So $q$ is odd and $|H|_2=(q^2-1)_2^n(n!)_2/2< (q^2-1)^n2^{n-1}$ and so $\log_2(|H|_2)< n-1+n(q^2-1)= nq^2-1$. If $n\ge 3$, then  $2m\ge q^{n}-1\ge nq^2-1> \log_2(|H|_2)$  for any $q\ge 3$ as $q^{n-2}\ge 3^{n-2}\ge n$. Assume $n=2.$  Then $|H|_2\leq (q^2-1)_2^2$ and so $\log_2(|H|_2)< 2\log_2(q^2-1)$. Thus we need to show that $2m\ge q^2-1>2\log_2(q^2-1)$ which is always true since $q^2-1\geq 8.$

(ii) Assume $r\ge 3$ is odd. Then $r$ divides $\prod_{i=1}^n(q^{2i}-1)$.
Let $j=d_q(r)$ be the order of $q$ modulo $r$. Then $1\leq j\leq 2n$ and $r\ge j+1$.

(ii)(1) Assume $j$ is odd. By \cite[Table 1]{Stather}, $|H|_r=|\GL_n(q)|_r\leq (q^j-1)_r^kr^{k/(r-1)}$ and hence $\log_r(|H|_r)\leq k/(r-1)+k\log_r(q^j-1)$. 

Assume $j=n$ is odd. Then $r\ge n+1$ and so $|H|_r=(q^n-1)_r$. Since $r\nmid (q-1)$, we have $\log_r(|H|_r)\leq (q^{n}-1)/(q-1)$. If $q>2$, then $2m\ge q^n-1>(q^{n}-1)/(q-1)\ge \log_r(|H|_r)$. If $q=2$, then $2m\ge 2^{n-1}(2^{n-1}-1)>2^n-1\ge \log_r(|H|_r)$.

Assume $n-1\ge j> n/2$. Then $k=\lfloor n/j\rfloor=1$ and thus $|H|_r\leq (q^j-1)_r$. Hence $\log_r(|H|_r)\leq q^{j}-1\leq q^{n-1}-1$. Now $2m\ge q^n-1>q^{n-1}-1\ge \log_r(|H|_r)$.

Assume $1\leq j\leq n/2$. Then $|H|_r\leq (q^j-1)^k_r(k!)_r\leq (q^j-1)_r^kr^{k/(r-1)}$ and thus $\log_r(|H|_r)< k(q^j-1)+k=kq^j\leq nq^{n/2}.$

Assume that $q\ge 3$. Then  $2m\ge q^n-1>q^n/2$. Observe that $q^n/2\ge nq^{n/2}$ is equivalent to $q^{n}\ge 4n^2$ which is always true for any $q\ge 3$ and $n\ge 4.$ Therefore,  $2m>q^n/2\ge nq^{n/2}\ge \log_r(|H|_r)$.
Assume $n=2$. Then $j=1$ and $\log_r(|H|_r)=2\log_r((q-1)_r)\leq 2(q-1)$ and so $2m\ge q^2-1>2(q-1)\ge \log_r(|H|_r)$. Similarly, if $n=3$, then $j=1$ and $\log_r(|H|_r)\leq 1+3\log_r((q-1)_r)\leq 1+3(q-1)<3q$ and so $2m\ge q^3-1>3q\ge \log_r(|H|_r)$.

Assume $q=2$. Then $n\ge 4$ and $2m\ge 2^{n-1}(2^{n-1}-1)/2= 2^{n-2}(2^{n-1}-1)$. Now $2^{n-2}(2^{n-1}-1)>n2^{n/2}$ if and only 
$2^{(n-4)/2}(2^{n-1}-1)>n$. Since $n\ge 4,$  $2^{n-1}>n$. Therefore, $2m\ge 2^{n-2}(2^{n-1}-1)>n2^{n/2}\ge \log_r(|H|_r),$ a contradiction.

(ii)(2) Assume $j$ is even.  So $(q^j-1)_r=(q^{j_0}+1)_r$. By \cite[Table 1]{Stather}, $|H|_r=|\GL_{2n}(q)|_r\leq (q^j-1)_r^k(k!)_r\leq (q^j-1)^k_rr^k$, where $k=\lfloor 2n/j\rfloor$ and hence $\log_r(|H|_r)\leq k+k\log_r(q^j-1)_r$. 

 Assume $j=2n.$ Then $r\mid (q^{n}+1)$ and $r\ge 2n+1$ so $|H|_r=(q^{2n}-1)_r=(q^n+1)_r$. Assume $q$ is odd. Then $(q^n+1)_r=((q^n+1)/2)_r$ since $r$ is odd. Thus $2m\ge q^n-1>(q^n+1)/2\ge \log_r(|H|_r)$. Assume $q$ is even. If $q=2$, then $2m\ge 2^{n-1}(2^{n-1}-1)/2=2^{n-2}(2^{n-1}-1)>(2^n+1)\ge \log_r(|H|_r)$  since $n\ge 4.$ Assume $q\ge 4$ is even. Then $2m\ge q^{n-1}(q^{n-1}-1)(q-1)/2>q^{n-1}(q^{n-1}-1)>q^n+1\ge \log_r(|H|_r)$.

 Assume $2n> j>n$. Then $k=1$. We have $|H|_r=(q^j-1)_r=(q^{j_0}+1)_r$ and so $\log_r(|H|_r)=\log_r(q^{j_0}+1)\leq q^{n-1}+1$.
Now $2m\ge q^n-1>q^{n-1}-1\ge \log_r(|H|_r).$

 Assume $n\ge j\ge 2.$ Then $k\leq n$ and  $|H|_r\leq  (q^j-1)_r^k(k!)_r\leq (q^{j_0}+1)^k_rr^k $ and hence $\log_r(|H|_r)\leq k+k(q^{j_0}+1)\leq k(q^{n/2}+2)\leq n(q^{n/2}+2)\leq 2nq^{n/2}$ as $n\ge 2$ and $q\ge 2$.

Assume first that $q$ is even. So $2m\ge q^{n-1}(q^{n-1}-1)> q^{2n-2}/2$. Now observe that $q^n\ge 4n$ for  any $n\ge 4$ and $q\ge 2$. Hence $q^{2n-2}/2=q^nq^{n-2}/2\ge q^n/2\ge 2nq^{n-2}\ge  2nq^{n/2}$ and thus  $2m>\log_r(|H|_r)$. 
Assume that $n=2$ or $3$. Then $q\ge 4$ and clearly $q^n\ge 4^n\ge 4n$ and again $2m>\log_r(|H|_r)$ as above.

Assume $q$ is odd. 
Assume  $n=2$. Then $j=2$ and $2m\ge q^2-1$. Now  $\log_r(|H|_r)= 2\log_r((q^2-1)_r)\leq 2(q+1)$. However, we see that $2m\ge q^2-1>2(q+1)\ge \log_r(|H|_r)$. Assume next that $n=3$. Again, $j=2$, $2m\ge q^3-1$ and $\log_r(|H|_r)=1+3\log_r((q^2-1)_r)\leq 1+3(q+1)<q^3\leq 2m.$ Assume that $n=4$. Then $j=2$ or $4$ and $2m\ge q^4-1$. If $j=2$, then $\log_r(|H|_r)\leq 1+4(q+1)<q^4-1\leq 2m.$ If $j=4$, then $\log_r(|H|_r)\leq 2(q^2+1)<q^4-1\leq 2m.$
Assume $n\ge 5.$ By induction on $n\ge 5$,  $q^{n}> 4n^2$ for $q\ge 3$ and thus $q^{n/2}> 2n$. Finally $q^n-1>2nq^{n/2}$ is equivalent to $q^{n/2}(q^{n/2}-2n)>1$ which is true by the previous claim. Thus $2m\ge q^n-1>2nq^{n/2}\ge \log_r(|H|_r)$.

\smallskip
(2b) Case $r=p$.  Let $L\cong \Sp_{2n}(q)$ with center $Z(L)$ of order $\gcd(2,q-1)$ and $L/Z(L)\cong H.$ Again, let $E=\FF_q$,  $F=\FF_p$ and $\mathbf{k}=\overline{\FF}_q$, where $q=p^f$. Recall that $N$ is an irreducible $FH$-module and hence it is an irreducible $FL$-module.

By \cite[Table 5.4.C]{KL},  the smallest dimension $D$ of nontrivial irreducible representations of $L$ over $\mathbf{k}$ is $2n$.  By \cite[Lemma 4.2]{GT05}, $|N|\ge q^D$ and hence $m\ge 2nf$. As $2m\le \log_p(|H|_p)$, we get $2m\leq n^2f$
and thus $n^2f\ge 4nf$, so $n\ge 4.$

Since $D=2n\ge 8$, we can apply \cite[Lemma 4.2]{GT05} directly and hence cases (i), (ii) or $(v)$ holds. If (v) holds, then $|N|\ge q^{2D^2}$ and hence $m\ge 2fD^2\ge 8n^2f$. If $(ii) $ holds, then $f$ is even and $m\ge (f/2)(2n)^2=2n^2f$. Finally, if  (i) holds and $N\otimes \mathbf{k}$ is a tensor product of at least two restricted irreducible $\mathbf{k}L$-modules, then $m\ge 4n^2f$. For these cases, we see that $m\ge 2fn^2$ and so $2m\ge 4n^2f>n^2f$ and thus $p^{2m}$ cannot divide $|H|$. Therefore, we are left with the case when $N\otimes \mathbf{k}$ is a restricted $\mathbf{k}L$-module and $f=e$. Assume $n\ge 7$. By \cite{Lubeck} or \cite[Table 5.4.A]{KL}, the dimension of a restricted $\mathbf{k}L$-module is either $2n$ or at least $2n^2-n-2$.  If the latter case holds, then $2m\ge 2(2n^2-n-2)f\ge 2n^2f>n^2f$ and thus $p^{2m}\nmid |H|$. Therefore, $m=2nf$ and $N\otimes \mathbf{k}$ is quasi-equivalent to the natural module for $L$. Hence $m=2nf$ and $|N|=q^{2n}$.

Assume $4\leq n\leq 6.$ If $q$ is odd, then either $m=2nf$ or $m\ge (2n^2-n-2)f$ and as above, the former case holds. 
Assume $q$ is even. Then $L$ has a spin module of dimension $2^n$ (see \cite[Table 5.4.A]{KL}).  If $m\ge 2^nf$, then $2m\ge 2^{n+1}f$ and we can check that $2^{n+1}>n^2$ for these values of $n$ and hence $2m>n^2f$. Thus $m=2nf$ as wanted.
\end{proof}

\section*{Acknowledgment}  The author is grateful to  the reviewers for suggestions and corrections that improve the exposition of the paper. 

\section*{Declarations}  

\textbf{Conflict of Interest} The author has no conflict of interest to declare that are relevant to this article.

\textbf{Funding} The author declares that no funds, grants, or other support were received during the preparation of this manuscript.

 \textbf{Data Availability}  Data sharing not applicable to this article as no datasets were generated or analyzed during the current study.

\end{document}